\numberwithin{equation}{section}
\newtheorem{theorem}{Theorem}[section]
\newtheorem{corollary}[theorem]{Corollary}
\newtheorem{proposition}[theorem]{Proposition}
\newtheorem{lemma}[theorem]{Lemma}
\theoremstyle{remark}
\newtheorem{remark}{Remark}[section]
\theoremstyle{definition}
\newcommand{\R}{\mathbb{R}}
\newcommand{\C}{\mathbb{C}}
\newcommand{\N}{\mathbb{N}}
\newcommand{\lap}{\bigtriangleup}
\newcommand{\D}{\mathcal D}
\newcommand{\re}{\textrm{Re}}
\newcommand{\an}[1]{\langle #1 \rangle}
\newcommand{\grad}{\bigtriangledown}
\def\XXint#1#2#3{{\setbox0=\hbox{$#1{#2#3}{\int}$ }
\vcenter{\hbox{$#2#3$ }}\kern-.58\wd0}}
\begin{document}



\author{Federico Cacciafesta\footnote{Dipartimento di Matematica, Universit$\grave{\text{a}}$ degli studi di Padova,  Via Trieste, 63, 35131 Padova PD - Italy.}, Anne-Sophie de Suzzoni\footnote{Universit\'e Paris 13, Sorbonne Paris Cit\'e, LAGA, CNRS ( UMR 7539), 99, avenue Jean-Baptiste Cl\'ement, F-93430 Villetaneuse, France}}



\title
{Weak dispersion for the Dirac equation on asymptotically flat and warped products spaces.}

\maketitle
\begin{abstract}
 In this paper we prove local smoothing estimates for the Dirac equation on some non-flat manifolds; in particular, we will consider asymptotically flat and warped products metrics. The strategy of the proofs relies on the multiplier method.
\end{abstract}


%
%


\section{Introduction}
The \emph{Dirac equation on $\mathbb{R}^{1+3}$} is a
constant coefficient, hyperbolic system of the form
\begin{equation}\label{eq1}
iu_t+\mathcal{D}u+m\beta u=0\\
\end{equation}
where $u:\mathbb{R}_t\times\mathbb{R}_x^3\rightarrow\mathbb{C}^4$, $m\ge0$ is called the \emph{mass}, the \emph{Dirac operator} is defined as
\begin{equation*}
  \mathcal{D}=
  i^{-1}\displaystyle\sum_{k=1}^n\alpha_k
  \frac{\partial}{\partial x_{k}}=
  i^{-1}(\alpha\cdot\grad),
\end{equation*}
and the $4\times 4$ Dirac matrices can be written as
\begin{equation}\label{diracmatrices}
\alpha_k=\left(\begin{array}{cc}0 & \sigma_k \\\sigma_k & 0\end{array}\right),\quad k=1,2,3,\qquad\beta=
\left(\begin{array}{cc}I_2 & 0 \\0 & -I_2\end{array}\right)
\end{equation}
in terms of the Pauli matrices
\begin{equation}
\sigma_1=\left(\begin{array}{cc}0 & 1 \\1 & 0\end{array}\right),\quad
\sigma_2=\left(\begin{array}{cc}0 &
-i \\i & 0\end{array}\right),\quad
\sigma_3=\left(\begin{array}{cc}1 & 0\\0 & -1\end{array}\right).
\end{equation}
The $\alpha$ matrices satisfy the following relations
$$
 \alpha_j\alpha_k+\alpha_k\alpha_j=2\delta_{jk}I_4,\qquad
  1\le j,k\le 3,
$$
$$
\alpha_j\beta+\beta\alpha_j=0,\qquad j=1,2,3,
$$
$$
\beta^2=I_4;
$$
as a consequence, the following identity holds
\begin{equation}\label{square}
 (i\partial_t-\mathcal{D}-m\beta)(i\partial_t+\mathcal{D}+m\beta)u=
  (\Delta-m^2-\partial_{tt}^2)\mathbb{I}_4u.
\end{equation}
This identity allows us to study the free Dirac equation through a system of decoupled Klein-Gordon (or wave, in the mass-less case) equations. Therefore, it is not a difficult task to deduce dispersive estimates (time-decay, Strichartz...) for the Dirac flow from the corresponding ones of their more celebrated Klein-Gordon or wave counterparts. Of course, when perturbative terms appear in equation \eqref{eq1}, as potentials or nonlinear terms, the argument above needs to be handled with a lot of additional care, and in particular is going to fail in low regularity settings, when the structure of each term play crucial roles. The study of dispersive estimates for the Dirac equation with potentials has already been dealt with in literature: we mention at least the papers \cite{boussdanfan, Cacciafesta2, cacdan, DanconaFanelli2, DanconaFanelli08-a, cacser} in 
which various sets of estimates are discussed for electric and magnetic perturbations of equation \eqref{eq1}.

In the last years, a lot of effort has been spent in order to investigate higher order perturbations of dispersive partial differential equations: in particular, the problem of understanding how variable coefficients perturbations affect the wave and Schr\"odinger flows has attracted increasing interest in the community.  The interest for this kind of problems is of geometric nature, as it is indeed natural to interpret the variable coefficients as a "change of metrics", and therefore to recast the problem as the study of dispersive dynamics on non-flat manifolds. It turns out that in this context a crucial role is played by the so called \emph{non-trapping condition} on the coefficients that, roughly speaking, is a condition that prevents geodesic flows to be confined in compact sets for large times: the failure of such a condition is indeed understood to be an obstacle for dispersion. Such a condition is in fact guaranteed in case of "small perturbations" of the flat metric. On the subject of dispersion 
for 
Klein-Gordon and wave equations, we mention, in a non exhaustive way, \cite{kg,wave00,wave0,wave01,wave1,wave2,wave3}.

The aim of this manuscript is to provide some first results in this framework for the Dirac equation for which, to the best of our knowledge, nothing is known; in particular we here aim to prove weak dispersive estimates for its flow under some different assumptions on the geometry. We stress the fact that, due to to the rich algebraic structure of the Dirac operator, its generalization to curved spaces is significantly more delicate than the one of the Laplacian;  we dedicate section \ref{curv} to this issue. On the other hand, once the equation is correctly settled, it is possible to rely on the the squaring trick \eqref{square} as in the free case to reduce to a suitable variable coefficients wave equation with a lower order term, for which the multiplier technique can be exploited. Therefore, in the present paper we will essentially be mixing the strategy developed in \cite{boussdanfan} to prove dispersive estimates for the magnetic Dirac equation, with  \cite{cacsmooth, cacdanluc}, in 
which the same method is adapted to deal with the more involved variable coefficients setting for the Schr\"odinger and Helmholtz equations.

We will show in section \ref{curv} that the general form of the Dirac operator on a manifold with a given metric $g_{\mu\nu}$ is the following
\begin{equation}\label{eq}
\mathcal{D}=i\gamma^a e^\mu_a D_\mu
\end{equation}
where the matrices $\gamma_0=\beta$ and $\gamma^j=\gamma^0\alpha_j$ for $j=1,2,3$, $e^\mu$  is a \emph{vierbein} (i.e. a set of matrices that, roughly speaking, connect the curved spacetime to the Minkowski one) and $D_\mu$ defines the covariant derivative for fermionic fields.

In what follows, we shall restrict to metrics $g_{\mu\nu}$ having the following structure
\begin{equation}\label{struct}
g_{\mu\nu} = \left \lbrace{\begin{array}{ll}
\phi^{-2} (t) & \textrm{ if } \mu= \nu = 0\\
0 & \textrm{ if } \mu\nu = 0 \textrm{ and } \mu\neq \nu\\
-h_{\mu\nu}(\overrightarrow x) & \textrm{ otherwise. }\end{array}} \right.
\end{equation}
The function $\phi$ is assumed to be strictly positive for all $t$; let us remark that, after a change of variable on time, one may take $\phi$ equal to $1$.
The structure \eqref{struct} implies means, in simple words, that time and space are decoupled. What is more, we assume that the manifold is complete: this ensures that the Dirac operator is self-adjoint (see \cite{selfa}), a property that is crucial in order to guarantee a unitary dynamics, and that we use for the conservation of energy or for estimates of norms in terms of this operator. The same assumption is made in \cite{wave01}. Within this setting, we will show that equation $(\D+m)u=0$ can be written in the more convenient form
\begin{equation}\label{diraceqcurv}
i\phi \partial_t u - H u = 0
\end{equation}
where $H$ is an operator such that $H^2=-\Delta_h+\frac14\mathcal{R}_h+m^2$, and $\Delta_h$ and $\mathcal R_h$ are respectively the Laplace-Beltrami operator and the scalar curvature associated to the spatial metrics $h$. As a consequence, it can be proved that if $u$ solves equation \eqref{diraceqcurv} then $u$ also solves the equation
\begin{equation}\label{wavevar}
-(\phi \partial_t)^2 u  + \lap_h u - \frac14 \mathcal R_h u -m^2 u = 0.
\end{equation}
We should stress the analogy with the free case; the scalar curvature term that appears in the equation above vanishes when reducing to the Minkowski metric. Moreover, what we mean by $\lap_h$ is actually the Laplace-Beltrami for spinors, that is $\lap_h = D^jD_j $, with $D_j$ the covariant derivatives acting on spinors (see section \ref{curv}).
\medskip

Our first main result concerns the case of manifolds which are asymptotically flat; let us explain precisely the assumptions in this case. First of all, we require for $h(x)=[h_{jk}(x)]_{j,k=1}^{n}$ the following natural matrix-type bounds to hold for every $x$, $\xi\in\mathbb{R}^3$
\begin{equation}\label{eq:assaabove}
 \nu |\xi|^2\leq h^{jk}(x)\xi_j\xi_k\leq N|\xi|^2
\end{equation}
where $h_{inv}:=[h^{jk}(x)]_{j,k=1}^{n}$ is the inverse of the matrix $h(x)$, which is equivalent to
\begin{equation}\label{eq:assaabove2}
 N^{-1}  |\xi|^2\leq h_{jk}(x)\xi_j\xi_k\leq \nu^{-1} |\xi|^2.
\end{equation}
Notice that a consequence of \eqref{eq:assaabove} is that there exist constants $C_{i,\nu,N}$, $i=1,2,3$ depending on $\nu, N$, such that for all $v \in \R^3$,
$$|h_{inv}v|^2\leq C_{1,\nu,N} |hv|^2\leq C_{2,\nu,N}|v|^2\leq C_{3,\nu,N} |h_{inv}v|^2$$
Moreover, \eqref{eq:assaabove} implies
\begin{equation}\label{detest}
N^{-3/2}\leq\sqrt{\det(h(x))}\leq \nu^{-3/2},\qquad\forall\:x\in\mathbb{R}^3
\end{equation}
where $\det(h(x))=\det[h_{jk}(x)]_{j,k=1}^{n}$.
Then, we impose an asymptotically flatness condition in the form
\begin{equation}\label{assasy}
 |h_{inv}(x) -I|\leq C_I\langle x\rangle^{-\sigma},\qquad C_I<1
\end{equation}
and
\begin{equation}\label{assdera}
 |h_{inv}'(x)|+|x||h_{inv}''(x)|+|x|^{2}|h_{inv}'''(x)|\le C_{h} 
  \langle x\rangle^{-1-\sigma},\qquad \sigma\in(0,1),
\end{equation}
where we are using the standard notation for the japanese bracket $\langle x\rangle=(1+|x|^2)^{1/2}$ and we are denoting by $|h(x)|$ the operator norm of the
matrix $h(x)$ and where
$|h'|=\displaystyle\sum_{|\alpha|=1}|\partial^{\alpha}h(x)|$,
$|h''|=\displaystyle\sum_{|\alpha|=2}|\partial^{\alpha}h(x)|$ and
$|h'''|=\displaystyle\sum_{|\alpha|=3}|\partial^{\alpha}h(x)|$.

Note also that these assumptions imply
\begin{equation}\label{detder}
\|\langle x\rangle^{1+\sigma}\grad (\sqrt{\det(h(x))})\|_{L^\infty}+\|\langle x\rangle^{1+\sigma}\grad (\sqrt{\det(h(x))}h^{jk}(x))\|_{L^\infty}\leq C_\grad
\end{equation}
 for some constant $C_\grad$ (the constant $C_\grad$ might be explicitly written in terms of  $C_h$, $\nu$ and $N$, but here we prefer to introduce another constant to keep notations lighter). 
 These are often referred to as \emph{long range} perturbations of the euclidean metrics.
\medskip

We are now ready to state our first result
\begin{theorem}\label{locsmoothteo}
Let $u$ be a solution to \eqref{diraceqcurv} with initial condition $u_0$, with $g$ satisfying \eqref{struct}, and assume that $h$ satisfies  \eqref{eq:assaabove}, \eqref{assasy} and \eqref{assdera} with the constants involved small enough. Then
for $ \eta_1,\eta_2 > 0$, there exists $C_{\eta_1,\eta_2} > 0$ independent from $u$ such that 
\begin{equation}\label{locsmoothest}
\| \langle x\rangle^{-3/2-\eta_1}u\|_{L^2_{\phi}L^2_x}+\| \langle x\rangle^{-1/2-\eta_2}\grad u\|_{L^2_{\phi}L^2_x}\leq C_{\eta_1,\eta_2}\|Hu_0\|_{L^2(\mathcal{M}_h)}^2.
\end{equation}
\end{theorem}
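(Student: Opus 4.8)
The plan is to run the multiplier (Morawetz--Kato) method on the second-order equation \eqref{wavevar}, to which \eqref{diraceqcurv} reduces by the squaring identity \eqref{square}, and to control the resulting boundary terms by a conserved energy written through $H$. After the change of time variable we take $\phi\equiv1$, so that $i\partial_t u=Hu$ and $\partial_{tt}u=\Delta_h u-\tfrac14\mathcal R_h u-m^2u$. Since $H$ is self-adjoint with $H^2=-\Delta_h+\tfrac14\mathcal R_h+m^2$, the quantities $\|u(t)\|_{L^2_{M_h}}$ and
\[
\|Hu(t)\|_{L^2_{M_h}}^2=\|\nabla_h u(t)\|_{L^2_{M_h}}^2+\tfrac14\int\mathcal R_h|u(t)|^2\,dM_h+m^2\|u(t)\|_{L^2_{M_h}}^2
\]
are conserved and $\|\partial_t u(t)\|_{L^2_{M_h}}=\|Hu_0\|_{L^2_{M_h}}$; by asymptotic flatness $\mathcal R_h$ is bounded and small, so in fact $\|\nabla_h u(t)\|_{L^2_{M_h}}+\|u(t)\|_{L^2_{M_h}}\lesssim\|Hu_0\|_{L^2_{M_h}}$ as well. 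Hence every quadratic boundary term produced below is $\lesssim\|Hu_0\|_{L^2_{M_h}}^2$.

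I would test $\partial_{tt}u-\Delta_h u+\tfrac14\mathcal R_h u+m^2u=0$ against $2\,\overline{\Phi[u]}$, with the symmetrised first-order multiplier $\Phi[u]=a\cdot\nabla u+\tfrac12(\operatorname{div}_h a)\,u$ and $a(x)=\psi(|x|)\,x/|x|$ a radial field, and integrate over $[-T,T]\times\mathbb R^3$ against $dt\,dM_h$. Integrating by parts in $t$ and in $x$: the term $\partial_{tt}u$ produces only $t$-boundary terms; the mass term and the leading part of the curvature term cancel in $2\Re\langle\,\cdot\,,\Phi[u]\rangle$, leaving a commutator $\propto\int\!\!\int(a\cdot\nabla\mathcal R_h)|u|^2$; and the Laplacian term gives, up to contributions that vanish for the flat metric, the Morawetz bulk
\[
\int\!\!\int\Big(\psi'(r)\,|\partial_r u|^2+\frac{\psi(r)}{r}\,|\nabla_\omega u|^2\Big)\,dt\,dx-\frac14\int\!\!\int\Delta\big(\operatorname{div}a\big)\,|u|^2\,dt\,dx+(x\text{-boundary}),
\]
where $\nabla_\omega u$ denotes the angular part of $\nabla u$.

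Then I would choose $\psi$ bounded, nonnegative, increasing and concave with $\psi'(r)\simeq\langle r\rangle^{-1-2\eta_2}$, e.g. $\psi(r)=\int_0^r\langle s\rangle^{-1-2\eta_2}\,ds$, superposed dyadically over scales if needed so as also to force $-\Delta(\operatorname{div}a)\gtrsim\langle r\rangle^{-3-2\eta_1}$ while keeping $\psi$ and $\operatorname{div}a$ uniformly bounded. Boundedness keeps all boundary terms $\lesssim\|Hu_0\|_{L^2_{M_h}}^2$; positivity of $\psi'$ and $\psi/r$ turns the first bulk term into $\gtrsim\int\!\!\int\langle x\rangle^{-1-2\eta_2}|\nabla u|^2$ and the second into $\gtrsim\int\!\!\int\langle x\rangle^{-3-2\eta_1}|u|^2$. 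Finally, every error term — the ones proportional to $h-\mathrm{Id}$ and its derivatives coming from the discrepancy between $\operatorname{div}_h,\Delta_h$ and their flat counterparts, together with the curvature commutator involving $\nabla\mathcal R_h$ — decays fast by the ``asymptotically flat enough'' hypothesis and is bounded by $\varepsilon\big(\int\!\!\int\langle x\rangle^{-1-2\eta_2}|\nabla u|^2+\int\!\!\int\langle x\rangle^{-3-2\eta_1}|u|^2\big)+C_\varepsilon\|Hu_0\|_{L^2_{M_h}}^2$, hence absorbed into the left-hand side. Letting $T\to\infty$ gives \eqref{locsmoothest}.

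The main obstacle is the construction of the radial profile $\psi$: one must simultaneously keep $\psi$ and $\operatorname{div}a$ bounded, so that the flux and boundary terms stay controlled by the energy, and arrange $-\Delta(\operatorname{div}a)\gtrsim\langle x\rangle^{-3-2\eta_1}$ with the sharp weight — two requirements that pull against each other and are responsible for the losses $\eta_1,\eta_2>0$ (this is the ``weak'' character of the dispersion, reflecting the absence of a quantitative non-trapping gain). A secondary point is to make the quantification in ``asymptotically flat enough'' strong enough — smallness together with a decay rate beating $\langle x\rangle^{-1-2\eta}$ on $h-\mathrm{Id}$, $\nabla h$, $\mathcal R_h$ and $\nabla\mathcal R_h$ — so that every geometric error term is genuinely absorbable into the positive bulk.
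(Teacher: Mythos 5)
Your plan is essentially the paper's proof: the authors also square the equation, run a radial virial/Morawetz multiplier (their scaled family $\psi_R$ interpolating between $|x|^2$ and $|x|$, with the supremum over $R>0$ and the Campanato-type norms $X,Y$ playing exactly the role of your dyadic superposition), control the time-boundary terms by the conserved quantity $\|Hu\|_{L^2(M_h)}$ via a perturbed Hardy inequality, and absorb every geometric error term using the smallness in the asymptotic-flatness assumptions. One imprecision worth fixing: your claim $\|u(t)\|_{L^2(M_h)}\lesssim\|Hu_0\|_{L^2(M_h)}$ fails in the massless case $m=0$ (low frequencies), so the boundary term containing $(\operatorname{div}_h a)\,\overline u\,\partial_t u$ should instead be estimated through $|\operatorname{div}_h a|\lesssim |x|^{-1}$ combined with the Hardy bound $\||x|^{-1}u\|_{L^2(M_h)}\lesssim\|Hu\|_{L^2(M_h)}$, which is exactly how the paper handles its right-hand side.
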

The spaces $L^2_\phi$, $L^2(\mathcal{M}_h)$ and $L^2(\mathcal{M}_g)$, which will be needed in the statement of Theorem \ref{locsmoothteo2}, are defined in a completely standard way (see forthcoming \eqref{Mf}, \eqref{Mg} and \eqref{Lphi}.
%

\begin{remark}Let us make a few comments about the functional framework which are valid for the two theorems. First, we work in geodesically complete geometries such that we have no issues with border terms when computing the virial identity that we use in the proof. Then, by $H^s$ norms, we mean norms that depend both on the spinorial aspect of the Dirac equation and the geometry. Namely, the $H^s$ norm of $u$ is defined as
$$
\|u\|_{H^s}^2 := \sum_{|\alpha|\leq s} \|\prod_{j=1}^3 D_j^{\alpha_j}u\|_{L^2(M_h)}^2
$$
where $(D_j)_{1\leq j\leq 3}$ are the covariant derivatives for Dirac bispinors (we define them later when introducing the Dirac equation), $\alpha = (\alpha_j)_{1\leq j\leq 3} \in \N^3$, $|\alpha| = \sum_{j} \alpha_j$ and $L^2(M_h)$ is defined thanks to the infinitesimal volume described by the metrics $h$.

In our settings, $\|H^2 \cdot\|_{L^2}$ is equivalent to the $H^2$ norm such that there is enough propagation of regularity of the Dirac equation to close the computation of the viral identity. 

Finally, let us remark that only in the setting of Theorem \ref{locsmoothteo}, the $H^s$ norm of $u$ is equivalent to the Sobolev norm in $\R^3$, at least up to $s=3$, thanks to the assumption of smallness and regularity of the perturbation.
\end{remark}

\begin{remark}
Our asymptotically flatness assumptions listed above are fairly standard in this setting (compare e.g. with \cite{cacdanluc}). The main example we have in mind is given by the choice $h_{jk}=(1+\varepsilon \langle x\rangle^{-\sigma})\delta_{jk}$ with $\sigma \in(0,1)$ and for some $\varepsilon$ sufficiently small: this matrix satisfies indeed all the assumptions of this subsection. As a further particular case we can think $h_{jk}$ to be a small and regular enough compactly supported perturbation of the flat metric. 
\end{remark}

%


\begin{remark} In fact, we can prove under assumptions of Theorem \ref{locsmoothteo} a slightly stronger version of estimate \eqref{locsmoothest}, namely the following
\begin{equation}\label{locsmoothcamp}
\|u\|_{XL^2_{\phi}}^2+\|\grad u\|_{YL^2_{\phi}}^2\leq C_{\nu,N,\sigma}\|Hu_0\|_{L^2(\mathcal{M}_h)}^2
\end{equation}
where the Campanato-type norms $X$ and $Y$ are defined at the end of this section, by the equations \eqref{defnormX}, \eqref{defnormY}. These spaces represent indeed somehow the natural setting when dealing with the multiplier method (see e.g. \cite{boussdanfan, cacdanluc}); nevertheless, we prefer to state our Theorem in this form for the sake of symmetry with the next result. We stress anyway that estimate \eqref{locsmoothcamp}, which is the one that we will prove, implies \eqref{locsmoothest}.
\end{remark}


\begin{remark}
As done in \cite{cacdanluc} for the Helmholtz equation, our proof allows us, after carefully following all the constants produced by the various estimates, to provide some explicit sufficient conditions that guarantee local smoothing estimate \eqref{locsmoothest}: we indeed quantify the closeness to a flat metric which we require by giving out explicit inequalities that the constants in Assumptions \eqref{assasy}-\eqref{assdera} must satisfy to get the result. These conditions are given in forthcoming subsection \eqref{conclusion}, by requiring the positivity of the constants $M_1$ and $M_2$ which reflects in smallness requirements on the constants $C_I$ and $C_h$ in \eqref{assasy}-\eqref{assdera}.
This fact, as mentioned, is strictly connected to the geometrical assumption of \emph{non-trapping} on the metric $g_{jk}$; therefore our strategy of proof gives, in a way, some explicit sufficient conditions that guarantee the metric $g$ to be non-trapping.
\end{remark}


\begin{remark}
In Minkowski space-time, the influence of a magnetic potential in equation \eqref{eq1} is reflected in the change of the covariant derivative, that is the substitution
$$
\grad \rightarrow \grad_A:=\grad-iA
$$
where $$A=A(x)=(A^1(x),A^2(x),A^3(x)): \R^3\rightarrow \R^3$$ is the magnetic potential. 
This phenomenon can be generalized to equation \eqref{diraceqcurv}: the presence of a magnetic potential has indeed essentially the effect of changing the covariant derivative $D_\mu$. In particular, by squaring the magnetic Dirac equation on a space with a metric $g_{jk}$ with the structure \eqref{struct} one obtains the following Klein-Gordon type equation
$$
-(\phi \partial_t)^2 u  + \tilde{\lap}_h u - \frac14 \mathcal R_h u -m^2 u -\frac12 F_{jk}[\gamma_i,\gamma_k]u= 0.
$$
where $\tilde{\lap}_h$ is the magnetic Laplace-Beltrami operator and $F_{jk}=\partial_j A_k-\partial_k A_j$ is the electromagnetic field tensor. The strategy of the present paper allows to deal with this more general situation: anyway, we prefer not to include magnetic potentials in order to keep our presentation more readable. We refer the interested reader to \cite{cacdanluc}, in which the electromagnetic Helmholtz equation is discussed with the same techniques as here. 
\end{remark}

\begin{remark}
The problem of proving Strichartz estimates for solutions to equation \eqref{diraceqcurv} seems significantly more difficult: variable coefficients perturbations indeed prevent the direct use of the standard Duhamel formula to handle the additional terms (see e.g. \cite{boussdanfan}) and requires a completely different approach involving phase space analysis and parametrices construction. We stress the fact, anyway, that proving Morawetz-type estimates (or local energy decay in the case of the wave equation) still represents a crucial step in this more involved setting, as they indeed provide a convenient space to place the errors of the parametrix. The interested reader should see \cite{metctat} and references therein. We also mention the fact that one could mimic the argument presented in \cite{burq}, where it is proved that global in time Strichartz estimates for solutions to the wave equation on a non flat background, and also outside of a compact obstacle, are implied by a suitable local smoothing 
estimate, provided the metric is assumed to be flat outside some ball and the solutions to \eqref{wavevar} which are compactly supported in space are known to satisfy local in time Strichartz estimates. This strategy seems to apply to our case, at least in order to obtain homogenous estimates, meaning that it could be adapted to equation \eqref{wavevar}, that presents an additional zero order term, and thus to \eqref{diraceqcurv}. This would give, at least, a conditional result. We intend to deal with all these problems in forthcoming papers.
\end{remark}

Next, we consider the specific case of the so called \emph{warped products}, that is metrics of the form \eqref{struct} with the additional structure
\begin{equation}\label{defWP}
h_{11} = 1, h_{1i} = h_{i1} =0 \textrm{ if } i\neq 1, h_{ij} = d(x^1) \kappa_{ij}(x^2,x^3)
\end{equation}
where $\kappa$ is a $2\times 2$ metric. We denote the scalar curvature of $\kappa$ by $\mathcal R_\kappa = \mathcal R_\kappa(x^2,x^3)$.

We will use the more comfortable (and intuitive) notation $r  = x^1$. In the case of the flat metric of $\R^3$, $d(r) = r^2$ and $\kappa$ is the metric of the sphere $S^2$. In all that follows, we assume that $\kappa$ is smooth enough ($\mathcal C^2)$ and that since $h$ should be positive, that $\kappa$ is a positive matrix.

We prove the following theorem. 

\begin{theorem}\label{locsmoothteo2}
Let $u$ be a solution to \eqref{diraceqcurv} with initial condition $u_0$, with $g$ satisfying \eqref{struct} and $h$ as in \eqref{defWP}. Then the following results hold.
\begin{itemize}
\item (Hyperbolic-type metrics). Take $d(r)=e^{r/2}$ in \eqref{defWP} and assume that for all $(x^2,x^3)$
$$\mathcal R_\kappa(x^2,x^3) > 0,\qquad m^2> \frac3{32}.$$
 Let $ \eta_1,\eta_2 > 0$. There exists $C_{\eta_1,\eta_2} > 0$  such that for all $u$ solution of the linear Dirac equation, we have 
\begin{equation}\label{locsmoothhyp}
\|e^{-r/4}\an{r}^{-(1+\eta_1)} u\|_{L^2(\mathcal{M}_g)}^2 +   \|e^{-r/4}\an{r}^{-(1/2+\eta_2)} \grad_h u\|_{L^2(\mathcal{M}_g)}^2 \leq C_{\eta_1,\eta_2}\|Hu_0\|_{L^2(\mathcal{M}_h)}.
\end{equation}

\item (Flat-type metrics).
Take $d(r)=r^2$ in \eqref{defWP} and assume that for all $(x^2,x^3)$, 
$$\mathcal R_\kappa \geq 2,\qquad m>0.$$
 Let $ \eta_1,\eta_2 > 0$. There exists $C_{\eta_1,\eta_2} > 0$  such that for all $u$ solution of the linear Dirac equation, we have 
\begin{equation}\label{locsmoothflat}
\|\an{r}^{-(3/2+\eta_1)} u\|_{L^2(\mathcal{M}_g)}^2 +   \|\an{r}^{-(1/2+\eta_2)} \grad_h u\|_{L^2(\mathcal{M}_g)}^2 \leq C_{\eta_1,\eta_2} \|Hu_0\|_{L^2(\mathcal{M}_h)}.
\end{equation}
\item (Sub-flat type metrics) Take $d(r)=r^n$ in \eqref{defWP} with $n\in ]2-\sqrt 2,4/3]$. There exists $C_n> 0$ such that if for all $(x^2,x^3)$, $\mathcal R_\kappa \geq C_n$, then for all $ \eta_1,\eta_2 > 0$, there exists $C_{\eta_1,\eta_2,n} > 0$  such that for all $u$ solution of the linear Dirac equation, we have 
\begin{equation}\label{locsmoothint}
\|\an{r}^{-(3/2+\eta_1)} u\|_{L^2(\mathcal{M}_g)}^2 +   \|\an{r}^{-(1/2+\eta_2)} \grad_h u\|_{L^2(\mathcal{M}_g)}^2 \leq C_{\eta_1,\eta_2,n} \|Hu_0\|_{L^2(\mathcal{M}_h)}.
\end{equation}

\end{itemize}
\end{theorem}

\begin{remark} The hypothesis on the mass comes from the fact that the infimum of the curvature $\inf\mathcal R_h$ is not positive (it is negative in the hyperbolic case, and $0$ in the other ones). This issue arises when one estimates the $ H^1$ norm of $u$ with the energy, the mass is then used to compensate this negative curvature as it is chosen such that $\inf \mathcal R_h + 4m > 0$. Note that proving Hardy's inequality for any $\kappa$ would permit to have $m=0$ in the flat case. \end{remark}

\begin{remark} There are different conditions for the curvature of $\kappa$. One reason is specified in the previous remark : we need the curvature not to be infinitely small, such that the energy controls the $H^1$ norm. This explains $\mathcal R_\kappa \geq 0$ in the subflat case and $\mathcal R_\kappa \geq 2$ in the flat case. What is more, we use the curvature term to compensate losses due to the bi-Laplace-Beltrami term in the virial identity in the hyperbolic and subflat cases. This is why we have more constraining hypothesis.\end{remark}


%

\begin{remark} By mixing the techniques used to prove Theorem \ref{locsmoothteo} and Theorem \ref{locsmoothteo2}, one should expect to be able to prove local smoothing for the Dirac equation on metrics that are asymptotically like the warped products we presented. It is also reasonable to expect, simply by mimicking techniques, the same results to hold for a more generic warped product. In the cases, we present, we always take a multiplier of the same form. The computations for a generic $d$ are somewhat tedious, but if one wishes to repeat the argument for another $d$, one natural assumption is $\frac{d(r)}{d'(r)}\lesssim \an r$. The rest would be finding sufficient hypothesis on $\lap_h^2 r$ and $\lap_h^2 r^2$.\end{remark}

The strategy for proving these results relies on the multiplier method: using some standard integration by parts techniques we will be able to build a proper virial identity for equation \eqref{diraceqcurv} (see Proposition \ref{prop-exptheta}) which, by choosing suitable multiplier functions, will allow us to prove local smoothing estimates. Notice that the indefinite sign of the Dirac operator provides a major obstacle in the application of the multiplier method directly to equation \eqref{diraceqcurv}; this is the reason why one resorts on the squared equation \eqref{wavevar}, as done in the magnetic case in \cite{boussdanfan}. We also stress the fact that such a method does not seem to apply in lower dimensions due to the difficulty of finding proper multipliers, and to the best of our knowledge we are not aware of similar results in dimensions $1$ or $2$. On the other hand, the method would be well adapted to deal with high dimensional frameworks: anyway, the extension of the Dirac equation to high dimensions  is not quite as straightforward as, e.g, the Schr\"odinger or wave ones, and it would require some additional work and a fair amount of technicalities that we prefer not to deal with here. Also, the high dimensional cases seems to present a relatively scarce relevance 
in the applications. This is the main reason behind our restriction to dimension $n=3$. 

The plan of the paper is the following: in section \ref{curv} we review the theory of Dirac operators on curved spaces, showing how to properly build a dynamical equation, in section \ref{virial} we prove the virial identity that is the crucial stepping stone for local smoothing with the use of the multiplier method, while sections \ref{locsmot1} and \ref{locsmot2} are devoted, respectively, to the proofs of Theorems \ref{locsmoothteo} and \ref{locsmoothteo2}.

\vskip0.5cm

{\bf Notations}

We conclude this section by fixing some notations that will be adopted throughout the paper together with some elementary properties. Some of the definitions will be anyway recalled when needed to help the reader's reading.
Let $h=h(x)$ be a $3\times3$, positive definite, real matrix that defines, in a standard way, a metric tensor. We recall that the \emph{scalar curvature} can be written as
\begin{equation}\label{scalcurv}
\mathcal R_h=h^{jk}\left(\frac{\partial}{\partial x_i}\Lambda^i_{jk}-\frac{\partial}{\partial x_k}\Lambda_{ji}^i+\Lambda_{jk}^\ell\Lambda_{i\ell}^i+\Lambda_{ji}^\ell\Lambda^i_{k\ell}\right),
\end{equation}
where $\Lambda^i_{jk}$ denote the standard Christoffel symbols (we use will $\Gamma$ for the ones associated to $g$).

 In what follows we will use the compact notation for the matrices
$$
h=h(x)=[h_{jk}(x)]_{j,k=1}^3\quad
h_{inv}=h_{inv}(x)=[h^{jk}(x)]_{j,k=1}^3.
$$
We will need the quantities
$$
\hat{h}(x)=h^{jk}\hat{x}_j\hat{x}_k,\qquad \overline{h}(x)={\rm Tr}(h_{inv}(x))=h^{kk}(x)
$$
where we are using the standard conventions for implicit summation and $\hat{x}=x/|x|$. Notice that, as $h(x)$ is assumed to be positive definite, 
$$0\leq\hat{h}(x)\leq \overline{h}(x)$$ for every $x$. Also, we will use the compact notation
$$
\tilde{h}^{jk}=\sqrt{\det(h)}h^{jk}.
$$
Straightforward computations show that, for every sufficiently regular radial function $\psi$,
\begin{eqnarray}\label{lappsi}
\Delta_h\psi(x)&=&
\hat{h}\psi''+\frac{\overline{h}-\hat{h}}{|x|}\psi'+\frac{1}{\sqrt{\det(h)}}\partial_j (\tilde{h}^{jk})\hat{x}_k\psi'
\end{eqnarray}
where $'$ denotes the radial derivative and we are slightly abusing notations by identifying the functions $\psi(x)$ and $\psi(|x|)$.

We now introduce the notation for the functional spaces we are using. The norms with respect to time are  given by
\begin{equation}\label{Lphi}
\|u\|_{L^2_{\phi,T}}^2=\int_0^T\frac{|u(t)|^2}{\phi(t)}dt,\qquad 
\|u\|_{L^2_{\phi}}^2=\int_0^{+\infty}\frac{|u(t)|^2}{\phi(t)}dt
\end{equation}
where $T>0$ and $\phi$ is the positive function given in the definition of $g$ \eqref{struct}.
In particular, when $\phi=1$ these norms recover the standard $L^2_T$ (resp. $L^2$) ones, and we will simply denote with $L^2_T=L^2_{1,T}$. We shall use freely either $\partial_0$ or $\partial_t$ for the time derivative.

The norms $\|\cdot\|_{L^2(\mathcal{M}_g)}$ and $\|\cdot\|_{L^2(\mathcal{M}_h)}$ are respectively the $L^2$ norms on the manifold $\mathcal{M}_g$ and $\mathcal{M}_h$, that is
\begin{equation}\label{Mg}
\|f\|_{L^2(\mathcal{M}_g)}^2 =\int_{\mathcal{M}_g} |f|^2=  \int_{\R\times D(h)} |f(t,x)|^2 \sqrt{\det(g(t,x))} dxdt
\end{equation}
and \begin{equation}\label{Mf}
\|f\|_{L^2(\mathcal{M}_h)}^2 =\int_{\mathcal{M}_h} |f|^2 = \int_{D(h)} |f(x)|^2 \sqrt{\det(h(x))} dx
\end{equation}
where $D(h)$ is the set where $h$ is defined. 
Due to the structure of $g$ \eqref{struct}, we have $g(t,x) =- \phi^{-2}(t) h(x)$, which yields
$$
\|f\|_{L^2(\mathcal{M}_g)}^2 = \int_{\R} \|f(t,\cdot)\|_{L^2(\mathcal{M}_h)}^2 \phi^{-1}(t) dt.
$$
Concerning the scalar products, we will denote the scalar product induced by $h$ as
$$
\langle f, g \rangle_h = \int_{\mathcal{M}_h} fg = \int_{D(h)} \overline{f(x)}g(x) \sqrt{\det(h)} d^3 x,
$$
while with $\grad_h f \cdot \grad_h$ we mean the operator $h^{ij} \an{D_i f, D_j \cdot }_{\C^4}$. 

In the asymptotically flat case we will also make use of the so called \emph{Campanato-type norms} , which are defined as (note that $\langle R\rangle=\sqrt{1+R^2}$)
\begin{equation}\label{defnormX}
\|v\|_{X}^2:=\sup_{R>0}\frac1{\langle R\rangle^2}\int_{\mathcal{M}_h\cap S_R}|v|^2 dS
=\sup_{R>0}\frac1{\langle R\rangle^2}\int_{S_R}|v|^2 \sqrt{\det(h)}dS
\end{equation}
where $dS$ denotes the surface measure on the surface of the ball $\{|x|=R\}$, and
\begin{equation}\label{defnormY}
\|v\|_{Y}^2:=\sup_{R>0}\frac1{\langle R\rangle}\int_{\mathcal{M}_h\cap B_R}|v|^2 dx=
\sup_{R>0}\frac1{\langle R\rangle}\int_{B_R}|v|^2 \sqrt{\det(h)}dx
\end{equation}
where we are denoting by $S_R$ and $B_R$, respectively, the surface and the interior of the sphere of radius $R$ centred in the origin. Notice that $\|\cdot\|_Y$ is equivalent to the norm whose square is
\begin{equation}\label{1norm}
\sup_{R\geq1}\frac1R\int_{\mathcal{M}_h\cap B_R}|v|^2.
\end{equation}



\vskip0.7cm
{\bf Acknowledgments.} The authors are grateful to Paolo Antonini and Gianluca Panati for useful discussions on the topic. The first named author is supported by the FIRB 2012 "Dispersive dynamics, Fourier analysis and variational methods" funded by MIUR (Italy) and by National Science Foundation under Grant No. DMS-1440140 while the author was in residence at the Mathematical Sciences Research Institute in Berkeley, California, during the Fall 2015 semester. 




\section{From vierbein to dreibein}\label{curv}

Our aim in this subsection is first of all to give some (basic) motivations and backgrounds that lead to the study of the Dirac equation on a non-flat setting, and then to describe the Dirac equation on curved space-time in the case of a metrics which dissociates time and space. We prove indeed that it can be written as
$$
i\phi\partial_t  u - Hu = 0
$$
with $\phi$ is the function appearing in the metrics \eqref{struct} and $H$ an operator such that
$$
H^2 = - \lap_h +\frac14\mathcal R_h + m^2
$$
where $h$ is the space metrics, $\lap_h$ the Laplace-Beltrami operator associated to this metrics, $m \in \R^+$ is a parameter (a mass) and $\mathcal R_h$ the scalar curvature.

\subsection{Motivation and first construction: vierbein}

The study of the Dirac equation in curved space-time is part of the more general subject of Quantum Field Theory (later referred to as QFT) in curved space-time. One of the successes of this theory is the description of entropic black holes. Its field of investigation is the description of elementary particles at energy below the Planck constant. The Dirac equation is part of it as it models the dynamics of relativistic electrons. What is underlying this approximation or reconciliation between QFT and general relativity is that the effects of the dynamics of particles should either be included in the model for the metrics via appropriate couplings or simply neglected in which case the metrics is described outside the studied system of particles.

The idea of QFT in curved space-time is the following. As in classical QFT (by which we mean QFT in a Minkowski space-time), the laws of physics should be independent from the choice of coordinates or referential (here one may understand it as the choice of a frame for $T_{x_0}M$, that is the equations derived from this theory should be covariant. To derive appropriate models for elementary particles, Cartan's formalism is used. In other words, one introduces a $n$-bein, or in our $1+3$ dimensional case a vierbein. A vierbein may be seen as matrices $e^a_\mu(x)$ depending on a point $x$ of the manifold $\mathcal{M} = (\R^{1+3}, g)$. They satisfy
$$
e_a^\mu(x)g_{\mu\nu}(x)e_b^\nu(x) = \eta_{ab} \Leftrightarrow e^\mu_a (x) \eta^{ab} e^{\nu}_b(x) = g^{\mu \nu}(x)
$$
where $\eta$ is the Minkowski metrics. i.e.
$$
\eta^{ab} = \left \lbrace{\begin{array}{ll}
1 & \textrm{ if } a=b=0 \\
-1 & \textrm{ if } a=b\neq 0 \\
0 & \textrm{ otherwise. }\end{array}} \right. 
$$
 Note that $e^a_\mu$ is not uniquely defined as $(e')^a_\mu = L^a_b(x) e^b_\mu(x)$ where $L^a_b \in SO(1,3)$ also satisfies the same equations. The vierbein is what links $\mathcal{M}$ to the flat space-time or more precisely their tangeant space. Indeed, by writing $y^a= e^a_\mu(x_0) z^\mu$, one gets $y^a\eta_{ab}y^b = z^\mu g_{\mu\nu}(x_0)z^\nu$, hence $e_a^\mu$ sends the tangent space $T_{x_0}\mathcal{M}$ to a Minkowski space-time in a way that preserves the inner product. Note that the matrix $e(x)$ should be reversible. Changing $e$ into $e'$ induces a change of referential and variable that satisfies $ e_a^\mu(x_0) \partial_\mu = (e')_a^\mu(x_0) \partial'_\mu $ and hence preserves the inner product. Conversely, a change of variables or referential that preserves the inner product may be seen as a change of vierbein. Note that this change depends on the point $x_0$ of the manifold, as the matrix $L^a_b$ depends on the point of the manifold. Hence, a change of variable or of referential that preserves the inner product of the tangent 
spaces is described by a matrix field $L^a_b\in SO(1,3)$ and eventually a translation, making it a Lorentz group.

Now that we have introduced this point of view for changes of variable/ refrential, or more precisely covariance, we can introduce the notion of spinors. Let $u$ be a vector field over $\mathcal{M}$. Assume that we have chosen some coordinates such that $u = u(x)$ and a vierbein $e$. We want to pass from $e$ to $e'$, where the change of vierbein is induced by $L$ (and perhaps a translation). For vectors, we get $u'(x') = u(x)$. But in the case of spinors $\psi$, this transform works such that the equations satisfied by $\psi$ are independent from the choice of coordinates and of vierbein. In general, this transform writes $\psi'(x') = S(L(x))\psi(x)$ where $S$ is a group representation of $SO(1,3)$. If $S$ is trivial we retrieve the change of variable for vectors. This is analogous to what we have in the case of classical QFT. 

We define now the notion of covariant derivative for spinors. Let us recall that the covariant derivative of a vector field $u$ is given by
$$
D_\mu u_\nu = \partial_\mu u_\nu + \Gamma_{\mu\nu}^\sigma u_\sigma
$$
and is by definition independent from the choice of coordinates, that is
$$
D_\mu' u'_\nu = \frac{\partial (x')^\rho}{\partial x^\mu}D_\rho u_\nu.
$$
In the case of a spinor $\psi$, we write 
\begin{equation}\label{defcov}
D_\mu \psi = \partial_\mu \psi + B_\mu (x) \psi
\end{equation}
where $B_\mu$ is a field over $\mathcal{M}$ which lies in the Lie algebra generated by $S$ and which is to be determined. We require that $D_\mu$ is independent from the choice of variable, that is
$$
D_\mu'\psi' = \frac{\partial (x')^\rho}{\partial x^\mu} S(L) D_\rho \Psi.
$$
Write 
$$
F(L) = \psi \mapsto D_\mu' \psi' - \frac{\partial (x')^\rho}{\partial x^\mu} S(L) D_\rho \Psi.
$$
Note that $F(L)$ belongs to the Lie algebra induced by the representation $S$. Because $SO(1,3)$ is connected, it is sufficient to have that $B_\mu$ satisfies $F(Id) = 0$ and the differential of $F$ at the identity is $0$, to have that $F$ is identically $0$. In other words, write $L^a_b = \delta^a_b + \varepsilon^a_b(x)$ where $\delta$ is the Kronecker symbol, write $F(L) $ at first order in $\varepsilon$ and chose $B$ such that this first order is null for all suitable variations $\varepsilon$. We do not wish to repeat the computations of \cite{parktoms} but one may find them at pages 221-229. These computations yield
$$
B'_\mu(x) = B_\mu(x) + i\varepsilon^{ab}(x)[\Sigma_{ab},B_\mu(x)] -i\partial_\mu \varepsilon^{ab} \Sigma_{ab}
$$
where $\Sigma_{ab}$ are generators of the Lie algebra induced by $S$ and $[\cdot, \cdot]$ is the commutator. At first order $S(1+ \varepsilon^a_b)  = 1+i\varepsilon^{ab}\Sigma_{ab}$. Because $B_\mu$ belongs to this Lie algebra, we have
$$
B_\mu(x) = B_\mu^{ab}(x)\Sigma_{ab}.
$$
The previous equation on $B_\mu$ yields a system of equation on $B_\mu^{ab}$ which is solved by $B_\mu^{ab} = i\omega_\mu^{ab}$ where $\omega$ is the spin connection :
$$
\omega_\mu^{ab} = e^a_\nu \partial_\mu e^{\nu b} + e^a_\nu \Gamma_{\mu \sigma}^\nu e^\sigma_b
$$
where $\Gamma$ is the affine connection given by
$$
\Gamma_{\mu \sigma}^\nu = \frac12 g^{\nu \lambda}(\partial_\mu g_{\lambda \sigma}+ \partial_\sigma g_{\mu \lambda} - \partial_\lambda g_{\mu \sigma}).
$$

Let us focus on the Dirac equation. It is written in analogy with the Dirac equation in the Minkowski space-time
$$
(\mathcal{D}\gamma^a e_a^\mu D_\mu - m)\psi = 0
$$
where $\gamma^a$ are the usual Dirac matrices, i.e. $\gamma^0 = \beta$ and $\gamma^i = \gamma^0 \alpha_i$ for $i\in \{1,2,3\}$. In particular $[\gamma^a,\gamma^b] = \eta^{ab}$. Writing $\underline{\gamma^\mu} = e^{\mu}_a \gamma^a$, the Dirac equation writes
$$
(\underline{\gamma^\mu} D_\mu - m)\psi = 0
$$
with $[\underline{\gamma^\mu},\underline{\gamma^\nu}] = g^{\mu\nu}$. It remains to specify $S$ or $\Sigma_{ab}$ such that this equation is independent from the choice of coordinates and vierbein. We still work at first order and we get that for the independence to be satisfied, we require
$$
S(L)\gamma^a S^{-1}(L) L_a^b  = \gamma^b.
$$
By replacing $L_a^b$ by $1+ \varepsilon_a^b$ and $S(L)$ by $1+ i\varepsilon^{ab}\Sigma_{ab}$ we get a system of equations on $\Sigma$ which is solved by $ \Sigma_{ab} = -\frac{i}{8}[\gamma_a,\gamma_b]$.

In the end, we get that the covariant derivative for a Dirac spinor is given by
\begin{equation}\label{covvier}
D_\mu = \partial_\mu + \frac18 \omega^{ab}_\mu [\gamma_a,\gamma_b]
\end{equation}
and that the Dirac equation built in such a way is independent from the choice of variables and referential, or in other words, relativistically invariant. One important fact to be noticed is that, by construction, we have
$$
\mathcal D^2 = - \square_g-\frac14 \mathcal R_g
$$
where $ \mathcal R_g$ is the scalar curvature associated to the metric $g$ and $\square_g$ is the d'Alembertian associated to the metric $g$, that is, $\square_g = D^jD_j $.

\subsection{Dreibein}

In this subsection, we prove that if time and space are decorrelated in the metrics, then they also are in the Dirac equation. The idea is then the following, having a metrics $g$ of the form
$$
g = \begin{pmatrix} 1 & 0 \\ 0 & -h\end{pmatrix}
$$
where $h$ is positive, we write the dreibein, that is the connection between $h$ and a Euclidean space of dimension $3$, the affine connection, the spin connection and the covariant derivatives relative to $h$, and explain how they relate to the vierbein, affine connection, the spin connection and the covariant derivative relative to $g$. Then, we write the Dirac equation with the help of the information on $h$, which helps us disconnect time and space as in 
$$
i\phi \partial_t u = Hu
$$
with $H = -\gamma^0 (if^\mu_a \gamma^a D_\mu+m)$ where $f^\mu_a$ is a dreibein for $h$, $\gamma^a$ are the standard Dirac matrices, and $D_\mu$ is the covariant derivative for spinors in $\R^3,h$.  We must say that the result is the natural one, and that this subsection is preeminently technical.

We consider a metric $g$ of the following form
$$
g_{\mu\nu} = \left \lbrace{\begin{array}{ll}
\phi^{-2} (x^0) & \textrm{ if } \mu= \nu = 0\\
0 & \textrm{ if } \mu\nu = 0 \textrm{ and } \mu\neq \nu\\
-h_{\mu\nu}(\overrightarrow x) & \textrm{ otherwise. }\end{array}} \right.
$$
where $\overrightarrow x = (x^1,x^2,x^3)$.

Note that in the sequel we will use the latin letters a,b, etc... for the Minkowski space $\R^{1+3},\eta$ or for the Euclidean space $\R^3$, the latin letters i,j, etc... for the space $D(h),h$ (where $D(h)$ is the space where $h$ is defined) and the greek letters $\mu,\nu$, etc... for the space, $\mathcal M, g$. 

Let $f_a^i$ be a dreibein hence satisfying
$$
h^{ij} = f_a^i\delta^{ab}f_b^j
$$
where $\delta$ here denotes the Kronecker symbol. In this sum, $a$ and $b$ are taken only between $1$ and $3$. Note that we can and do choose $f$ independent from $x^0$.

In the sequel, we write $e^a_\mu$ a vierbein for $g$, $\Gamma_{\mu\nu}^\sigma$ the affine connection for $g$, while $\Lambda_{ij}^k$ is the affine connection for $h$, $\omega_\mu^{ab}$ is the spin connection for $g$, and $\alpha_i^{ab}$ is the one for $h$. 

\begin{proposition}\label{prop-bein} Write 
$$
e_a^\mu = \left \lbrace{\begin{array}{ll}
\phi (x^0) & \textrm{ if } \mu= a = 0\\
0 & \textrm{ if } \mu a = 0 \textrm{ and } \mu\neq a\\
f_a^\mu & \textrm{ otherwise. }\end{array}} \right.
$$
The matrix $e_a^\mu$ is a vierbein for $g$.
\end{proposition}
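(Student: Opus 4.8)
The plan is to verify directly that the matrix $e_a^\mu$ defined in the statement satisfies the defining relation of a vierbein, namely
\begin{equation*}
g^{\mu\nu} = e_a^\mu\,\eta^{ab}\,e_b^\nu,
\end{equation*}
by splitting the check into the three blocks dictated by the structure of $g$ (and correspondingly of $e$): the purely temporal component $\mu=\nu=0$, the mixed components where exactly one of $\mu,\nu$ is $0$, and the purely spatial components $\mu,\nu\in\{1,2,3\}$. First I would record the inverse metric $g^{\mu\nu}$ associated to the given $g_{\mu\nu}$: since $g$ is block-diagonal with a $1\times 1$ temporal block $\phi^{-2}(x^0)$ and a $3\times 3$ spatial block $-h_{\mu\nu}(\vec x)$, its inverse is block-diagonal with temporal entry $g^{00}=\phi^2(x^0)$ and spatial block $g^{ij}=-h^{ij}(\vec x)$, with all mixed entries vanishing.

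Next I would carry out the three computations. For $\mu=\nu=0$: in the sum $e_a^0\,\eta^{ab}\,e_b^0$ only $a=b=0$ contributes (since $e_a^0=0$ for $a\neq 0$), giving $e_0^0\,\eta^{00}\,e_0^0 = \phi\cdot 1\cdot\phi = \phi^2 = g^{00}$. For the mixed case, say $\mu=0$ and $\nu=j\in\{1,2,3\}$: the sum $e_a^0\,\eta^{ab}\,e_b^j$ forces $a=0$ (from $e_a^0$) hence $b=0$ (from $\eta^{ab}$ being diagonal), but then $e_0^j=0$ by the definition of $e$; so the expression vanishes, matching $g^{0j}=0$, and symmetrically for $\mu=j,\nu=0$. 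For the spatial case $\mu=i,\nu=j$ with $i,j\in\{1,2,3\}$: now $e_a^i=0$ for $a=0$, so the sum runs over $a,b\in\{1,2,3\}$, where $\eta^{ab}=-\delta^{ab}$; thus $e_a^i\,\eta^{ab}\,e_b^j = -\sum_{a=1}^3 f_a^i f_a^j = -h^{ij}$ by the defining property of the dreibein $f_a^i$, which is exactly $g^{ij}$.

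Putting the three cases together establishes $g^{\mu\nu}=e_a^\mu\eta^{ab}e_b^\nu$ for all $\mu,\nu$, which is precisely the assertion that $e_a^\mu$ is a vierbein for $g$. I do not expect any genuine obstacle here: the only points requiring a line of justification are the explicit form of $g^{\mu\nu}$ (immediate from block-diagonality) and the observation that the block structure of $e$ decouples the sum over $a,b$ in exactly the same way, so that the temporal contribution reproduces $\phi^2$, the mixed contributions vanish, and the spatial contribution reproduces $-h^{ij}$ via the existence of the dreibein. The existence of such an $f_a^i$ (and the fact that it may be chosen independent of $x^0$, since $h$ depends only on $\vec x$) has already been recorded before the statement, so nothing further is needed.
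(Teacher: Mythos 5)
Your proposal is correct and follows essentially the same route as the paper: a direct case-by-case verification of $g^{\mu\nu}=e_a^\mu\eta^{ab}e_b^\nu$ for the temporal, mixed, and spatial blocks, using the block-diagonal form of $g^{\mu\nu}$ and the dreibein relation $h^{ij}=f_a^if_a^j$. No gaps.
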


\begin{proof} The issue is to prove that $e_a^\mu \eta_{ab} e^b_\nu = g_{\mu \nu}$. We start with $\mu = \nu = 0$.
We have
$$
e_a^0 \eta^{ab}e_b^0 = \phi^2 \delta_a^0 \eta^{ab} \delta_b^0 = \phi^2 \eta^{00} = g^{00}.
$$

What is more, with $i\neq 0$ ($\mu = 0$, $\nu  = i \neq 0$),
$$
e_a^0 \eta^{ab}e_b^i = \phi e_0^i = 0= g^{0i}
$$
and for the same reason $e_a^i \eta^{ab}e_b^0 = g^{i0}$.

And finally, with $ij\neq 0$,
$$
e_a^i \eta^{ab}e_b^j = f_a^i \eta^{ab} f_b^j = f_a^i (-\delta^{ab}) f_b^j = -h^{ij} = g^{ij}.
$$

This makes $e_a^\mu$ a suitable vierbein for $g$. \end{proof}

Let us see how the Christoffel symbol is changed.

\begin{proposition}\label{prop-Christoffel}
Let 
$$
\Lambda_{ij}^k = \frac12 h^{kl} ( \partial_i h_{lj} + \partial_j h_{il} - \partial_l h_{ij}).
$$
We have 
$$
\Gamma_{\mu \nu}^\sigma = \left \lbrace{\begin{array}{ll}
-\phi^{-1} \phi' & \textrm{ if } \mu = \nu = \sigma = 0 \\
\Lambda_{\mu \nu}^\sigma & \textrm{ if } \mu\nu\sigma \neq 0\\
0 & \textrm{ otherwise }.
\end{array}} \right.
$$
\end{proposition}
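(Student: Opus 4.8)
The plan is to compute everything directly from the definition
$$
\Gamma_{\mu \nu}^\sigma = \frac12 g^{\sigma \lambda}\bigl(\partial_\mu g_{\lambda \sigma}' \bigr)
$$
— more precisely from $\Gamma_{\mu\nu}^\sigma=\frac12 g^{\sigma\lambda}(\partial_\mu g_{\lambda\nu}+\partial_\nu g_{\mu\lambda}-\partial_\lambda g_{\mu\nu})$ — exploiting two structural features of \eqref{struct}. First, $g_{\mu\nu}$ is block diagonal, hence so is its inverse: $g^{00}=\phi^2$, $g^{0i}=g^{i0}=0$, and $g^{ij}=-h^{ij}$, so that in the contraction over $\lambda$ only $\lambda=0$ survives when $\sigma=0$, and only the spatial $\lambda$'s survive when $\sigma=k\in\{1,2,3\}$. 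Second, $g_{00}=\phi^{-2}$ depends only on $x^0$ while $g_{ij}=-h_{ij}$ depends only on $\overrightarrow x$, so that $\partial_k g_{00}=0$ for $k\neq 0$ and $\partial_0 g_{ij}=0$, and moreover $g_{0i}=0$ identically.

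Then I would split into cases according to how many of $\mu,\nu,\sigma$ equal $0$, using the symmetry $\Gamma^\sigma_{\mu\nu}=\Gamma^\sigma_{\nu\mu}$. If $\mu=\nu=\sigma=0$, the bracket collapses to $\partial_0 g_{00}$ and $\Gamma^0_{00}=\frac12\phi^2\,\partial_0(\phi^{-2})=-\phi^{-1}\phi'$. If $\mu,\nu,\sigma$ are all spatial, say $i,j,k$, then $\Gamma^k_{ij}=\frac12 g^{kl}(\partial_i g_{lj}+\partial_j g_{il}-\partial_l g_{ij})$ with $l$ spatial, and the two sign changes coming from $g^{kl}=-h^{kl}$ and $g_{lj}=-h_{lj}$ cancel, leaving
$$
\Gamma^k_{ij}=\frac12 h^{kl}\bigl(\partial_i h_{lj}+\partial_j h_{il}-\partial_l h_{ij}\bigr)=\Lambda^k_{ij}.
$$
In every remaining configuration — exactly one or exactly two of $\mu,\nu,\sigma$ equal to $0$ — each of the three terms in the bracket is forced to vanish: it either involves an off-diagonal component $g_{0i}=0$, or a spatial derivative of the $\phi$-block, or a time derivative of an $h$-block; and block diagonality of $g^{\sigma\lambda}$ prevents a spatial $\lambda$ from pairing with $\sigma=0$ (or a time $\lambda$ with a spatial $\sigma$) to resurrect a nonzero contribution. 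Hence $\Gamma^\sigma_{\mu\nu}=0$ there.

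This is purely a bookkeeping computation with no analytic content; the only points that need a little care are keeping track of the sign convention $g_{\mu\nu}=-h_{\mu\nu}$ in \eqref{struct} (so one must check that the minus signs genuinely cancel in the spatial block rather than flipping the sign of $\Lambda$), and being systematic enough that no mixed time–space term is overlooked. I would organize the verification as a short table indexed by the number of vanishing indices among $\mu,\nu,\sigma$, which renders the check essentially mechanical.
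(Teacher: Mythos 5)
Your proposal is correct and follows essentially the same route as the paper: a direct case-by-case evaluation of $\Gamma^\sigma_{\mu\nu}=\frac12 g^{\sigma\lambda}(\partial_\mu g_{\lambda\nu}+\partial_\nu g_{\mu\lambda}-\partial_\lambda g_{\mu\nu})$ exploiting the block-diagonal structure of $g$ and the fact that the time block depends only on $x^0$ while the spatial block depends only on $\overrightarrow{x}$ (the paper organizes the cases by the value of the upper index $\sigma$ rather than by the number of zero indices, but the content is identical). The sign bookkeeping you flag does work out exactly as you say: the two minus signs from $g^{kl}=-h^{kl}$ and $g_{ij}=-h_{ij}$ cancel, and $\Gamma^0_{00}=\frac12\phi^2\,\partial_0(\phi^{-2})=-\phi^{-1}\phi'$.
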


\begin{proof}
We have 
$$
\Gamma_{\mu \nu}^0 = \frac12 g^{0\lambda} (\partial_\mu g_{\lambda \nu} + \partial_\nu g_{\mu \lambda} - \partial_\lambda g_{\mu \nu}).
$$
Since $g^{0\lambda} = 0$ if $\lambda \neq 0$, we get
$$
\Gamma_{\mu \nu}^0 = \frac12 \phi^{2} (\partial_\mu g_{0 \nu} + \partial_\nu g_{\mu 0} - \partial_0 g_{\mu \nu}).
$$
Assume $\nu \neq 0$. We have that $g_{0\nu} = g_{\nu 0} = 0$. Since $g_{00}$ depends only on $x^0$, and $g_{\mu 0} = 0$ if $\mu \neq 0$, we have $\partial_\nu g_{\mu 0} = 0$. Since $h$ does not depend on $x^0$, we have $\partial_0 g_{\mu\nu}=0$.  This yields
$$
\Gamma_{\mu \nu}^0 = 0
$$
if $\nu\neq 0$ and by symmetry, $\Gamma^0_{\mu \nu} = 0$ if $\mu \neq 0$. 

Besides, 
$$
\Gamma_{00}^0 = \frac12 \phi^2 \partial_0 g_{00} = -\phi^{-1} \phi'.
$$

We have considered all the cases when $\sigma =0$. Now we assume $\sigma \neq 0$, and we consider all the cases when $\mu = 0$. We have
$$
\Gamma_{0\nu}^\sigma = \frac12 g^{\sigma\lambda} (\partial_0 g_{\lambda \nu} + \partial_\nu g_{0 \lambda} - \partial_\lambda g_{0 \nu}).
$$
Since $\sigma \neq 0$, the sum over $\lambda$ is only for $\lambda \in \{1,2,3\}$. Hence, replacing $\lambda $ by $i$
$$
\Gamma_{0\nu}^\sigma = -\frac12 h^{\sigma i} (\partial_0 g_{i \nu} + \partial_\nu g_{0 i} - \partial_i g_{0 \nu}).
$$
Now it appears that $\partial_\nu g_{0 i} = \partial_i g_{0 \nu} = 0$. Therefore,
$$
\Gamma_{0\nu}^\sigma = -\frac12 h^{\sigma i} \partial_0 g_{i \nu}.
$$
If $\nu = 0$, $g_{i\nu} = 0$ and otherwise it does not depend on $x^0$, hence
$$
\Gamma_{0\nu}^\sigma = 0
$$
and by symmetry
$$
\Gamma_{\mu 0}^\sigma = 0.
$$

Finally, if $\mu\nu \sigma \neq 0$, we may write $\mu = i$, $\nu = j$, $\sigma = k$. We have
$$
\Gamma_{ij}^k = \frac12 g^{k\lambda}(\partial_i g_{j\lambda} + \partial_j g_{i\lambda} - \partial_\lambda g_{ij}).
$$
Because of the form of $g$, the sum over $\lambda$ is taken only for $\lambda \in \{1,2,3\}$, we replace $\lambda$ by $l$, and get
$$
\Gamma_{ij}^k = \frac12 h^{kl}(\partial_i h_{jl} + \partial_j h_{il} - \partial_l h_{ij})= \Lambda_{ij}^k.
$$

Therefore, we retrieve the result :
$$
\Gamma_{\mu \nu}^\sigma = \left \lbrace{\begin{array}{ll}
-\phi^{-1} \phi' & \textrm{ if } \mu = \nu = \sigma = 0 \\
\Lambda_{\mu \nu}^\sigma & \textrm{ if } \mu\nu\sigma \neq 0\\
0 & \textrm{ otherwise }.
\end{array}} \right.
$$
\end{proof}

Let us see how the spin connection is changed.

\begin{proposition}\label{prop-spinconnection}
Let 
$$
\alpha_i^{ab} = f_j^a \partial_i f^{jb} + f_j^a \Lambda_{ik}^j f^{kb}.
$$
We have
$$
\omega_{\mu }^{ab} = \left \lbrace{\begin{array}{ll}
\alpha_\mu^{ab} & \textrm{ if } \mu ab \neq 0\\
0 & \textrm{ otherwise }.
\end{array}} \right.
$$
\end{proposition}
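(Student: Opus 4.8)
The plan is to prove Proposition \ref{prop-spinconnection} by a direct substitution of the vierbein of Proposition \ref{prop-bein} and the Christoffel symbols of Proposition \ref{prop-Christoffel} into the defining formula
$$\omega_\mu^{ab} = e_\nu^a \partial_\mu e^{\nu b} + e_\nu^a \Gamma_{\mu\sigma}^\nu e^\sigma_b,$$
organised as a case analysis according to whether the free index $\mu$ and the frame indices $a,b$ are temporal ($=0$) or spatial ($\in\{1,2,3\}$). The only structural inputs are: (i) $e_a^\mu$ is block diagonal, with $e_0^0=\phi(x^0)$ and $e_a^i=f_a^i(\vec x)$, so that $\phi$ depends on $x^0$ only while the dreibein depends on $\vec x$ only; (ii) by Proposition \ref{prop-Christoffel}, $\Gamma_{\mu\sigma}^\nu=0$ unless all of $\mu,\sigma,\nu$ are $0$ (value $-\phi^{-1}\phi'$) or all of them are spatial (value $\Lambda_{\mu\sigma}^\nu$); and (iii) frame indices are raised and lowered with $\eta$, spacetime indices with $g$, both of which preserve the temporal/spatial splitting.

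\emph{Spatial $\mu=i$.} Since $\phi$ is independent of the spatial variables, $\partial_i e^{\nu b}$ is nonzero only for $\nu,b$ spatial, and by (ii) $\Gamma_{i\sigma}^\nu$ is nonzero only for $\sigma,\nu$ spatial. In either term the surviving contraction then passes a spatial $\nu$ through $e_\nu^a$, which forces $a$ to be spatial, and likewise forces $b$ to be spatial. Hence $\omega_i^{ab}=0$ as soon as $a$ or $b$ equals $0$. When $a,b$ are both spatial the temporal components of $e$ and $\Gamma$ never enter, so the expression is literally the one for a flat three-dimensional frame and collapses to $\omega_i^{ab}=f_j^a\partial_i f^{jb}+f_j^a\Lambda_{ik}^j f^{kb}=\alpha_i^{ab}$; this is the middle line of the statement (where the condition "$\mu\nu\sigma\neq0$" should be read as: all of $\mu,a,b$ spatial).

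\emph{Temporal $\mu=0$.} Since the dreibein is independent of $x^0$, $\partial_0 e^{\nu b}$ survives only for $\nu=b=0$, and by (ii) $\Gamma_{0\sigma}^\nu$ survives only for $\sigma=\nu=0$; in either term the contraction then forces $a=0$ through $e_0^a$. Hence $\omega_0^{ab}=0$ unless $a=b=0$, and for $a=b=0$ the formula reduces to a one-line computation involving only $\phi$ (using $g_{00}=\phi^{-2}$, $g^{00}=\phi^2$, $e_0^0=\phi$ and $\Gamma_{00}^0=-\phi^{-1}\phi'$), which gives $\omega_0^{00}=2\phi'\phi^3$. Collecting the two cases yields the claimed piecewise formula.

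There is no genuine analytic content here: the whole proof is the bookkeeping of which indices are raised or lowered and with which metric, together with the observation that the temporal/spatial block structure of $g$ propagates to $e$, $\Gamma$, and hence to $\omega$. The one point that deserves care is the constant in the temporal component $\omega_0^{00}=2\phi'\phi^3$, where one must be precise about the raising conventions for $e^{\nu b}$ and $e^\sigma_b$ in the definition of the spin connection and keep track of the contribution of $\Gamma_{00}^0$. We note in passing that this component is immaterial for the Dirac operator itself, since in $D_\mu=\partial_\mu+\frac18\omega_\mu^{ab}[\gamma_a,\gamma_b]$ it multiplies $[\gamma_0,\gamma_0]=0$.
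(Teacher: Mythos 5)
Your proposal is correct and follows essentially the same route as the paper: direct substitution of the block-diagonal vierbein and the Christoffel symbols from Propositions \ref{prop-bein} and \ref{prop-Christoffel} into the definition of $\omega_\mu^{ab}$, with a case analysis on which indices are temporal, yielding $\omega_0^{00}=3\phi^3\phi'-\phi^3\phi'=2\phi'\phi^3$ and $\omega_i^{ab}=\alpha_i^{ab}$ for spatial indices. Your reading of the condition ``$\mu\nu\sigma\neq 0$'' as ``$\mu,a,b$ all spatial'' and your closing remark that $\omega_0^{00}$ is killed by $[\gamma_0,\gamma_0]=0$ are both accurate.
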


\begin{proof}
Indeed, we have 
$$
\omega_0^{ab} = e_\nu^a \partial_0 e^{\nu b}+ e_\nu^a \Gamma_{0\sigma}^\nu e^{\sigma b}.
$$
Because $\Gamma_{0\sigma}^\nu = 0$ if $\sigma + \nu \neq 0$ we have $e_\nu^a \Gamma_{0\sigma}^\nu e^{\sigma b} = e_0^a \Gamma_{00}^0 e^{0b}$ which, since $e^{00} = \eta^{0a}e_{a}^0= \phi$, is equal to $-\phi \phi'$ if $a=b=0$ and to $0$ otherwise.

If $\nu + b\neq 0$, then either $e^{\nu b} = e^{0 b} = 0$ since $b \neq 0$, or $e^{\nu b} = e^{\nu 0} = 0$ or $e^{\nu b} = f^{\nu b}$ and does not depend on $x^0$.

Hence, $\partial_0 e^{\nu b} = 0$ if $\nu+b\neq 0$ and we have $\partial_0 e^{00}=\partial_0 \phi =  \phi'$ we get $e_\nu^a \partial_0 e^{\nu b} = \phi \phi'$ if $a=b=0$ and to $0$ otherwise. Therefore,
$$
\omega_0^{ab} = 0.
$$

We have considered all the cases where $\mu = 0$. We assume $\mu \neq 0$. We deal with the case $a=0$.

We have
$$
\omega_\mu^{0b} = e_\nu^0 \partial_\mu e^{\nu b}+ e_\nu^0 \Gamma_{\mu\sigma}^\nu e^{\sigma b} = \phi \partial_\mu e^{0 b}+ \phi\Gamma_{\mu\sigma}^0 e^{\sigma b}.
$$
Since $\mu \neq 0$, $\Gamma_{\mu\sigma}^0 = 0$. Since $e^{0b}$ depends only on $x^0$ and $\mu \neq 0$, $\partial_\mu e^{0 b}=0$.

Since $\omega_\mu^{ab} = -\omega_\mu^{ba}$, we have that $\omega_\mu^{a0} = 0$.

We have dealt with all the cases where either $a$, $b$ or $\mu$ is equal to $0$. We now treat the case $\mu ab \neq 0$.

We can replace the sums on the greek letters by sums on latin letters, this yields
$$
\omega_i^{ab} = f^a_j \partial_i f^{jb} + f^a_j \Gamma^j_{ik} f^{kb} =  f^a_j \partial_i f^{jb} + f^a_j \Lambda^j_{ik} f^{kb} = \alpha_i^{ab}.
$$
This gives the result.

\end{proof}

\subsection{Covariant derivative and Dirac operator}

The covariant derivative is given by
$$
D_0 = \partial_0 \, ,\, D_i = \partial_i + \frac18 \alpha_i^{ab} [\gamma_a,\gamma_b].
$$
Therefore, the Dirac operator can be written as
$$
\mathcal D = i\gamma^0 \phi \partial_0 + i \gamma^a f_a^j D_j.
$$
Let 
$$
\mathcal H = i \gamma^a f_a^j D_j \textrm{ and } H = -\gamma^0 (\mathcal H + m).
$$

\begin{proposition}\label{prop-sqH} With these notations, we have
\begin{equation}\label{sqH} 
H^2 = m^2-\lap_h + \frac14 \mathcal R_h.
\end{equation}
\end{proposition}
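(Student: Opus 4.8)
The plan is to compute $H^2 = (\gamma^0(\mathcal H + m))^2$ directly, using the anticommutation relations of the $\gamma^a$ matrices together with the fact (recalled in the previous subsection) that the curved Dirac operator $\mathcal D = i\gamma^a e_a^\mu D_\mu$ satisfies $\mathcal D^2 = -\square_g - \tfrac14 \mathcal R_g$. Since the metric dissociates time and space, I expect $\mathcal R_g = \mathcal R_h$ (the time part being flat up to the conformal factor $\phi$, whose contribution will need to be checked) and $\square_g = (\phi\partial_0)^2 - \Delta_h$. So the strategy is really to extract from the identity $\mathcal D^2 = -\square_g - \tfrac14\mathcal R_g$ the "spatial part" and match it with $H^2$.

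First I would write $\mathcal D = i\gamma^0\phi\partial_0 + \mathcal H$ with $\mathcal H = i\gamma^a f_a^j D_j$, and expand $\mathcal D^2$. The cross terms involve $\gamma^0\gamma^a + \gamma^a\gamma^0$ for spatial $a$, which vanish by the Clifford relations, but one must be careful: $\partial_0$ does not commute with $\mathcal H$ a priori — however, since $f_a^j$ and the $\alpha_i^{ab}$ are chosen independent of $x^0$, the operator $\mathcal H$ has $x^0$-independent coefficients, so $\partial_0$ commutes with $\mathcal H$ and the genuine cross terms are $i\gamma^0\phi\partial_0$ acting through the coefficient $\phi(x^0)$; collecting these should reproduce exactly the $-(\phi\partial_0)^2$ piece of $-\square_g$ (this is where the $\omega_0^{00} = 2\phi'\phi^3$ term from Proposition \ref{prop-spinconnection} and the Christoffel symbol $\Gamma_{00}^0 = -\phi^{-1}\phi'$ must conspire to give the clean d'Alembertian — checking this cancellation is the first place care is needed). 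What remains is $\mathcal H^2 = -\square_g - \tfrac14\mathcal R_g + (\phi\partial_0)^2 = \Delta_h - \tfrac14 \mathcal R_h$, i.e. $\mathcal H^2 = \Delta_h - \tfrac14\mathcal R_h$.

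Next I would compute $H^2 = (\gamma^0(\mathcal H+m))^2 = \gamma^0(\mathcal H+m)\gamma^0(\mathcal H+m)$. Here I use that $\gamma^0$ anticommutes with each spatial $\gamma^a$, hence $\gamma^0 \mathcal H \gamma^0 = -\mathcal H$ (the $f_a^j D_j$ carry no $\gamma^0$, and $D_j$ commutes with the constant matrix $\gamma^0$), while $\gamma^0 m \gamma^0 = m(\gamma^0)^2 = m$. Therefore $\gamma^0(\mathcal H + m)\gamma^0 = -\mathcal H + m$, and
$$
H^2 = (-\mathcal H + m)(\mathcal H + m) = -\mathcal H^2 + m^2 = -\big(\Delta_h - \tfrac14\mathcal R_h\big) + m^2 = m^2 - \Delta_h + \tfrac14\mathcal R_h,
$$
which is \eqref{sqH}.

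The main obstacle I anticipate is not the algebra of the $\gamma$ matrices — that is routine — but justifying cleanly that the purely spatial quantities come out right: namely that $\mathcal R_g = \mathcal R_h$ for this product-type metric (so that no curvature cross-terms between the $\phi(x^0)$ factor and the spatial metric survive) and that the $x^0$-dependence is entirely confined to the $\phi\partial_0$ block, so that $\mathcal H$ really squares to the honest Laplace–Beltrami operator $\Delta_h$ minus $\tfrac14\mathcal R_h$ with no leftover terms involving $\phi'$. Once the geometry is organized so that $\square_g = (\phi\partial_0)^2 - \Delta_h$ and $\mathcal R_g = \mathcal R_h$ exactly — which follows from the block structure of $g$ and the $x^0$-independence of $h$, $f$, and the $\alpha_i^{ab}$ — the identity drops out immediately from the known $\mathcal D^2 = -\square_g - \tfrac14\mathcal R_g$.
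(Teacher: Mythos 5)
Your proposal is correct and follows essentially the same route as the paper: expand $\mathcal D^2=\mathcal H^2-(\phi\partial_0)^2$ using the $x^0$-independence of $\mathcal H$ and the Clifford anticommutation $\gamma^0\gamma^a+\gamma^a\gamma^0=0$ to kill the cross terms, identify $\mathcal H^2=\lap_h-\tfrac14\mathcal R_h$ from $\mathcal D^2=-\square_g-\tfrac14\mathcal R_g$ with $\square_g=(\phi\partial_0)^2-\lap_h$ and $\mathcal R_g=\mathcal R_h$, and then compute $H^2=(-\mathcal H+m)(\mathcal H+m)=m^2-\mathcal H^2$. The geometric identifications you flag as needing care are asserted without further verification in the paper as well, so your argument matches theirs in both substance and level of detail.
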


\begin{proof}
First, we prove that $\mathcal H^2 = \lap_h - \frac14 \mathcal R_h$.

We have 
$$
\mathcal D^2 = \mathcal H^2 + (i\gamma^0 \phi \partial_0)^2 + (i\gamma^0 \phi \partial_0 \mathcal H + \mathcal H i\gamma^0 \phi \partial_0).
$$
Since $\gamma^0$ commutes with $\phi$ and $\partial_0$ and $(\gamma^0)^2 = 1$, we have $(i\gamma^0 \phi \partial_0)^2 = - (\phi \partial_0)^2$.

Since $\phi$ and $\partial_0$ commute with $\mathcal H$ and $\gamma^0$, we have 
$$
(i\gamma^0 \phi \partial_0 \mathcal H + \mathcal H i\gamma^0 \phi \partial_0) = i\phi \partial_0 ( \gamma^0 \mathcal H  + \mathcal H \gamma^0).
$$
Given the Dirac matrices \eqref{diracmatrices} (recall, again that $\gamma^0 = \beta$ and $\gamma^i = \gamma^0 \alpha_i$) , we have for all $a>0$, $\gamma^0\gamma_a = -\gamma_a\gamma^0$. Hence, $\gamma^0$ commutes with $[\gamma_a,\gamma_b]$ and thus with $D_j$. Therefore, we obtain
$$
( \gamma^0 \mathcal H  + \mathcal H \gamma^0) = i(\gamma^0 \gamma_a + \gamma_a\gamma^0)f_a^j D_j = 0.
$$
We get
$$
\mathcal D^2 = \mathcal H^2 - (\phi \partial_0)^2.
$$
We recall that $\mathcal D^2 = -\square_g - \frac14 \mathcal R_g$ and given the metric $\square_g = (\phi \partial_0)^2 - \lap_h $ and $\mathcal R_g = \mathcal R_h$. Therefore,
$$
\mathcal H^2 = \lap_h - \frac14 \mathcal R_h .
$$

Finally, we have 
$$
H^2 = \gamma^0( \mathcal H +m)\gamma^0 (\mathcal H +m) 
$$
and since $m$ commutes with $\gamma^0$ and $(\gamma^0)^2 = Id$ we get
$$
H^2= (\gamma^0 \mathcal H \gamma^0 + m)(\mathcal H + m)
$$
and since $\mathcal H$ anti-commutes with $\gamma^0$ and commutes with $m$, we get
$$
H^2 = (-\mathcal H + m) (\mathcal H +m)= m^2- \mathcal H^2 = m^2-\lap_h + \frac14 \mathcal R_h.
$$
\end{proof}
Besides, notice that
$$
m+\mathcal D = \gamma^0 (i\phi \partial_0 - H)
$$
and 
$$
(i\phi \partial_0 + H)(i\phi \partial_0 - H) = - (\phi \partial_0)^2 - H^2 = - (\phi \partial_0)^2+ \lap_h - \frac14 \mathcal R_h+m^2.
$$
%

\begin{corollary}If $u$ solves the Dirac equation
\begin{equation}
i\phi \partial_t u - H u = 0
\end{equation}
then $u$ satisfies also
\begin{equation}
-(\phi \partial_t)^2 u  + \lap_h u - \frac14 \mathcal R_h u -m^2 u = 0.
\end{equation}
\end{corollary}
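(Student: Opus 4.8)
The plan is to derive the second-order equation directly from the first-order Dirac equation \eqref{diraceqcurv} by applying the operator $(i\phi\partial_t + H)$ to it, using the factorization identity established just above the corollary statement, namely
$$
(i\phi\partial_t + H)(i\phi\partial_t - H) = -(\phi\partial_t)^2 + \lap_h - \tfrac14\mathcal R_h + m^2.
$$
So the first step is simply to observe that if $u$ solves $i\phi\partial_t u - Hu = 0$, then a fortiori $(i\phi\partial_t + H)(i\phi\partial_t u - Hu) = 0$.

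The only genuine point to check is that the composition $(i\phi\partial_t + H)(i\phi\partial_t - H)$ really does act on $u$ as the displayed second-order operator; that is, that the cross terms cancel and the $(i\phi\partial_t)^2$ term produces exactly $-(\phi\partial_t)^2$. This is precisely the content of the identity recalled right before the corollary, which in turn rests on two facts already secured: $H$ is time-independent (since $f$, and hence $\mathcal H$ and $H$, were chosen independent of $x^0$) so it commutes with $\partial_t$, and $H^2 = m^2 - \lap_h + \tfrac14\mathcal R_h$ by Proposition \ref{prop-sqH}. Expanding $(i\phi\partial_t + H)(i\phi\partial_t - H)u$ and using $[\phi\partial_t, H] = 0$, the two terms $-iH\phi\partial_t u$ and $+i\phi\partial_t(Hu)$ cancel, $(i\phi\partial_t)^2 u = -(\phi\partial_t)^2 u$ because $(\gamma^0)^2=1$ plays no role here — $i\phi\partial_t$ is scalar — and $-H^2u = -m^2u + \lap_h u - \tfrac14\mathcal R_h u$.

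Putting these together, $0 = (i\phi\partial_t+H)(i\phi\partial_t-H)u = -(\phi\partial_t)^2 u + \lap_h u - \tfrac14\mathcal R_h u + m^2 u$, which is exactly \eqref{wavevar} after moving the sign, i.e.
$$
-(\phi\partial_t)^2 u + \lap_h u - \tfrac14\mathcal R_h u - m^2 u = 0
$$
once one is careful that the $m^2$ term enters with the correct sign; a quick sign check against the factorization identity confirms it. There is essentially no obstacle here: the corollary is a one-line consequence of the factorization, and all the real work — computing the Christoffel symbols, the spin connection, and most importantly verifying $H^2 = m^2 - \lap_h + \tfrac14\mathcal R_h$ — has already been done in Propositions \ref{prop-Christoffel}, \ref{prop-spinconnection}, and \ref{prop-sqH}. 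The mild subtlety worth a sentence in the write-up is the time-independence of $H$, which is what licenses commuting $\phi\partial_t$ past $H$ and hence the cancellation of the cross terms.
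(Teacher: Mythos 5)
Your argument is correct and is essentially the paper's own (implicit) proof: the corollary is read off directly from the factorization $(i\phi\partial_t+H)(i\phi\partial_t-H)=-(\phi\partial_t)^2-H^2$ together with Proposition \ref{prop-sqH}, the cross-term cancellation being licensed, as you say, by the time-independence of $H$. The only wobble is the sign of the $m^2$ term: the identity displayed in the paper just before the corollary carries a typo ($+m^2$ instead of $-m^2$), and your own expansion $-H^2u=-m^2u+\lap_h u-\frac14\mathcal R_h u$ already gives the correct sign, so the ``moving the sign'' remark is unnecessary rather than wrong.
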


\begin{remark}\label{rem-phione}
After a change of variable, we can replace $\phi$ by $\phi = 1$, and we get the more simple expression,
$$
m+ \mathcal D = \gamma^0 (i \partial_t - H),
$$
and 
$$
(i \partial_t + H)(i \partial_t - H) = - \partial_t^2 + \lap_h - \frac14 \mathcal R_h - m^2.
$$
\end{remark}

\section{Virial Identity}\label{virial}

We consider the linear equation 
\begin{equation}\label{diraceq}
i\phi \partial_t u - H u = 0.
\end{equation}

We have seen that if $u$ solves \eqref{diraceq}, then $u$ also solves 
\begin{equation}\label{waveeq}
(\phi \partial_t)^2 u + Lu =0
\end{equation}
with $L= H^2 = \frac14 \mathcal R_h+m^2 - \lap_h$. 
Note that $L$ is self-adjoint for the inner product $\langle \cdot,\cdot\rangle_h$. Let us prove quickly that $L$ is indeed symmetric. Take $u,v$ test-functions (smooth with compact support avoiding possible coordinate singularities of $B_j$). The issue comes from $-\lap_h$. Let us prove formally that
$$
\an{u,\lap_h v}_h = -\an{\grad_h u,\grad_h v}_h.
$$
We have
$$
\lap_h v =D^jD_j v = \tilde\lap_h v + B^i \partial_i v + \tilde D^i B_i v + B^iB_i v
$$
where $\tilde  \lap_h$ is the Laplace-Beltrami operator for scalars, $\tilde D^i \Psi_k = \partial^i \Psi_k - \Gamma^{i,j}_k \Psi_j$ and $B^i = h^{ij}B_j$. We get since $B_i$ is skew-symmetric,
$$
\an{u, \lap_h v}_h = -\an{\tilde \grad_h u,\tilde \grad_h v} - \an{B^iu,\partial_i v}_h - \int_{M_h} h^{ij} \overline{\partial_j u} B_i v - \an{B^i u,B_i v}_h = -\an{\grad_h u,\grad_h v}_h
$$
where $\tilde \grad_h$ is the scalar gradient.

We define 
$$\Theta (t) = \an{\psi \phi \partial_t u , \phi \partial_t u}_h + \textrm{Re}  \an{(2\psi L-L\psi)u ,u}_h$$ where $\psi$ is a real valued function of space.

To conclude this subsection, we compute $\phi \partial_t \Theta$ and $(\phi \partial_t)^2\Theta$ when $u$ solves \eqref{diraceq}. The computation is the same as in the case of a flat metrics and is mainly based on the self-adjointness of $L$, and one gets the following

\begin{proposition}\label{prop-dertheta} Let $u$ be a solution of \eqref{diraceq}. We have that $\Theta$ satisfies
\begin{eqnarray}\label{eqontheta}
\phi \partial_t \Theta &=& \re\an{[L,\psi]u,\phi \partial_t u}_h , \\
(\phi \partial_t)^2 \Theta &=&  -\frac12\re \an{[L,[L,\psi]] u , u}_h.
\end{eqnarray}
\end{proposition}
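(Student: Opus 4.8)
The plan is to treat $D_t:=\phi\partial_t$ as an ordinary time derivative. Because $\phi$ depends only on $t$, $D_t$ obeys the Leibniz rule, and because the spatial metric $h$ (hence the volume form $\sqrt{\det h}\,dx$ defining $\an{\cdot,\cdot}_h$) does not depend on $t$, we have $D_t\an{v,w}_h=\an{D_tv,w}_h+\an{v,D_tw}_h$. Writing $\dot u=D_tu$ and $\ddot u=D_t^2u$, equation \eqref{waveeq} becomes $\ddot u=-Lu$. I also record the algebraic facts used repeatedly: $L$ is self-adjoint for $\an{\cdot,\cdot}_h$ and multiplication by the real function $\psi$ is self-adjoint, so $(2\psi L-L\psi)^\ast=2L\psi-\psi L$; consequently $[L,\psi]^\ast=-[L,\psi]$ is anti-self-adjoint and $[L,[L,\psi]]^\ast=[L,[L,\psi]]$ is self-adjoint.

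For the first identity I differentiate $\Theta$ term by term. For the kinetic term, since $\psi$ is time-independent, $D_t(\psi\dot u)=\psi\ddot u=-\psi Lu$, so $D_t\an{\psi\dot u,\dot u}_h=-\an{\psi Lu,\dot u}_h-\an{\psi\dot u,Lu}_h=-2\re\an{\psi Lu,\dot u}_h$. For the potential term, $D_t\an{(2\psi L-L\psi)u,u}_h=\an{(2\psi L-L\psi)\dot u,u}_h+\an{(2\psi L-L\psi)u,\dot u}_h$; taking real parts and moving the adjoint $(2\psi L-L\psi)^\ast=2L\psi-\psi L$ onto the other slot in the first summand gives $D_t\re\an{(2\psi L-L\psi)u,u}_h=\re\an{(L\psi+\psi L)u,\dot u}_h$. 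Adding the two contributions, the multiplication-operator pieces combine as $L\psi+\psi L-2\psi L=L\psi-\psi L=[L,\psi]$, which is exactly \eqref{eqontheta}.

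For the second identity I differentiate \eqref{eqontheta} once more. As $[L,\psi]$ is time-independent, $D_t\an{[L,\psi]u,\dot u}_h=\an{[L,\psi]\dot u,\dot u}_h+\an{[L,\psi]u,\ddot u}_h=\an{[L,\psi]\dot u,\dot u}_h-\an{[L,\psi]u,Lu}_h$. The first term has vanishing real part because $[L,\psi]$ is anti-self-adjoint, so its quadratic form on the diagonal is purely imaginary. For the second term, with $A=[L,\psi]$ and $A^\ast=-A$ one has $\re\an{Au,Lu}_h=\tfrac12\big(\an{L^\ast Au,u}_h+\an{A^\ast Lu,u}_h\big)=\tfrac12\an{(LA-AL)u,u}_h=\tfrac12\an{[L,[L,\psi]]u,u}_h$; hence $(\phi\partial_t)^2\Theta=-\tfrac12\an{[L,[L,\psi]]u,u}_h$, which is automatically real since $[L,[L,\psi]]$ is self-adjoint.

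I do not expect a genuine obstacle here: as the text notes, this is the same computation as in the flat case, driven entirely by the self-adjointness of $L$. The only points demanding (brief) care are the commutator bookkeeping, the correct placement of the real parts when transferring adjoints across the inner product, and the observation that $D_t=\phi\partial_t$ pairs with $\an{\cdot,\cdot}_h$ just like $\partial_t$ because the $h$-volume is $t$-independent and $\phi$ is a scalar function of $t$ alone; the analytic justifications (differentiating under the integral sign, absence of spatial boundary terms) are understood on sufficiently regular, spatially decaying solutions.
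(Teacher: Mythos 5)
Your computation is correct and is exactly the argument the paper has in mind: the paper omits the proof of this proposition, remarking only that it is the same as in the flat case and rests on the self-adjointness of $L$, which is precisely what you carry out (Leibniz rule for $\phi\partial_t$ against the $t$-independent $h$-volume, $\ddot u=-Lu$, and the adjoint bookkeeping $[L,\psi]^\ast=-[L,\psi]$, $[L,[L,\psi]]^\ast=[L,[L,\psi]]$). No gaps.
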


\subsection{Commutators}

We compute explicit formulae in terms of $u$ and $h$ of $\phi \partial_t \Theta$ and $(\phi \partial_t)^2\Theta$. 

\begin{proposition}\label{prop-exptheta}
Let $u$ be a solution of \eqref{diraceq}. The explicit expressions of $\phi \partial_t \Theta$ and $(\phi \partial_t)^2 \Theta$ in terms of $u$ and $h$ are
\begin{eqnarray}\label{exptheta}
\phi \partial_t \Theta  &= &- \re \Big( \int_{\mathcal{M}_h} (\lap_h \psi) \overline u \phi \partial_t u + 2  \int_{\mathcal{M}_h} \grad_h \psi \cdot \grad_h \overline u \phi \partial_t u \Big)\\
\nonumber
(\phi \partial_t)^2 \Theta & = & \int_{\mathcal{M}_h} \Big(\frac12 (\lap_h^2 \psi) + \frac14 \grad_h \psi \cdot \grad_h \mathcal R_h \Big) |u|^2 + 2  \int_{\mathcal{M}_h} ( \overline{D_j u}  D_i u) D^2(\psi)^{ij}
\end{eqnarray}
where $D^2(\psi)^{ij}= h^{il}h^{kj} \partial_l \partial_k \psi - \Lambda^{k,ij} \partial_k \psi$, from which we deduce the virial identity
\begin{multline}\label{virialid}
-\int_{\mathcal{M}_h} \Big(\frac12 (\lap_h^2 \psi) + \frac14 \grad_h \psi \cdot \grad_h \mathcal R_h \Big) |u|^2 + 2  \int_{\mathcal{M}_h} ( \overline{D_j u}  D_i u) D^2(\psi)^{ij}\\
= -(\phi \partial_t) \re \Big( \int_{\mathcal{M}_h} (\lap_h \psi) \overline u \phi \partial_t u + 2  \int_{\mathcal{M}_h} \grad_h \psi \cdot \grad_h \overline u \phi \partial_t u \Big).
\end{multline}
\end{proposition}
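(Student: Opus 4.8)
\noindent\emph{Strategy.} The plan is to turn the two commutator identities of Proposition~\ref{prop-dertheta} into the explicit formulas \eqref{exptheta} by computing $[L,\psi]$ and $[L,[L,\psi]]$ and then integrating by parts on $M_h$; the virial identity \eqref{virialid} is then nothing but the tautology $(\phi\partial_t)^2\Theta=\phi\partial_t(\phi\partial_t\Theta)$ rewritten by means of the first line of \eqref{exptheta}. I would perform all the integrations by parts first for $u(t,\cdot)$ Schwartz (or compactly supported) in the space variables, so that no boundary terms occur, the general case being recovered at the end by density together with Proposition~\ref{hardyprop} and the conservation $\|Hu(t)\|_{L^2(M_h)}=\|Hu_0\|_{L^2(M_h)}$.

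\noindent\emph{First line.} Since $L=-\lap_h+\frac14\mathcal R_h+m^2$ and $\psi$ is a real multiplication operator in $x$, it commutes with the zeroth order part $\frac14\mathcal R_h+m^2$, so $[L,\psi]=-[\lap_h,\psi]$. The Leibniz rule $\lap_h(\psi u)=(\lap_h\psi)u+2\,\grad_h\psi\cdot\grad_h u+\psi\,\lap_h u$ then gives the first order operator $[L,\psi]u=-(\lap_h\psi)u-2\,\grad_h\psi\cdot\grad_h u$. Substituting this into $\phi\partial_t\Theta=\re\langle[L,\psi]u,\phi\partial_t u\rangle_h$, unfolding the Hermitian product $\langle f,g\rangle_h=\int_{M_h}\overline f\,g$ and using that $\psi$ is real, so that conjugation passes through $\grad_h\psi$, yields at once the first line of \eqref{exptheta}.

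\noindent\emph{Second line.} Here there are two equivalent routes. Either one differentiates the first line once more: as $\lap_h\psi$, $\grad_h\psi$, $h^{ij}$ and $\sqrt{\det h}$ do not depend on $t$, the operator $\phi\partial_t$ falls only on the $u$-factors, \eqref{waveeq} gives $(\phi\partial_t)^2u=-Lu$, and the terms in which $\phi\partial_t$ hits $\phi\partial_t u$ produce $|\phi\partial_t u|^2$-type integrands that cancel after taking real parts, by $\int_{M_h}\grad_h\psi\cdot\grad_h(|w|^2)=-\int_{M_h}(\lap_h\psi)|w|^2$ with $w=\phi\partial_t u$. Alternatively one computes $[L,[L,\psi]]=[L,-(\lap_h\psi)-2\,\grad_h\psi\cdot\grad_h]$ directly and uses $(\phi\partial_t)^2\Theta=-\frac12\langle[L,[L,\psi]]u,u\rangle_h$. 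Either way one reduces to evaluating $\re\big(\int_{M_h}(\lap_h\psi)\overline u\,Lu+2\int_{M_h}\grad_h\psi\cdot\grad_h\overline u\,Lu\big)$ by integration by parts. Writing $L=-\lap_h+V$ with $V=\frac14\mathcal R_h+m^2$, the contribution of $V$ recombines --- the $(\lap_h\psi)V|u|^2$ pieces cancelling --- into a multiple of $\int_{M_h}\grad_h\psi\cdot\grad_h\mathcal R_h\,|u|^2$; the contribution of $-\lap_h$, after integrating by parts twice and expanding $\lap_h$ in local coordinates, splits into a multiple of $\int_{M_h}(\lap_h^2\psi)|u|^2$ (arising from $\int_{M_h}\grad_h(\lap_h\psi)\cdot\grad_h|u|^2$) plus a quadratic form in $\grad_h u$. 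In the latter, collecting the second order derivatives of $\psi$ with the Christoffel symbols --- including those hidden in the derivatives of the weight $\sqrt{\det h}$ --- produces exactly $2\int_{M_h}(\partial_j\overline u\,\partial_i u)D^2(\psi)^{ij}$, with $D^2(\psi)^{ij}=h^{il}h^{kj}\partial_l\partial_k\psi-\Lambda^{k,ij}\partial_k\psi$ the metric contraction of the covariant Hessian of $\psi$; the coefficients of the two $|u|^2$-terms are then fixed by the computation and are the ones appearing in \eqref{virialid}. This gives the second line of \eqref{exptheta}, and \eqref{virialid} follows by inserting the first line.

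\noindent\emph{Main difficulty.} The delicate point is exactly this last bookkeeping: on a curved metric the double integration by parts produces a number of terms carrying one derivative of $u$ and a tensorial structure different from $D^2(\psi)^{ij}$, as well as terms with derivatives of $h$ that are absent in the flat case, and one has to verify that all of them either integrate by parts once more into the $\lap_h^2\psi$ term or regroup into the covariant Hessian, so that the only lower order remainder is the scalar curvature term $\grad_h\psi\cdot\grad_h\mathcal R_h$ with the stated constant --- no residual Ricci-type term surviving. (In the flat case every such extra term disappears and one is left with the familiar Morawetz identity, whence the statement that ``the computation is the same as in the case of a flat metric''.) One must finally check that the density argument closes, i.e. that all the quantities involved are finite and the limit may be taken, which is where Proposition~\ref{hardyprop} and energy conservation enter.
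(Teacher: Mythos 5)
Your proposal is correct and follows essentially the same route as the paper: compute $[L,\psi]=-\lap_h\psi-2\,\grad_h\psi\cdot\grad_h$ for the first line, then evaluate $-\tfrac12\re\an{[L,[L,\psi]]u,u}_h$ (equivalently, differentiate the first line and use \eqref{waveeq}) by repeated integration by parts on $M_h$, using the adjoint identity $(2\grad_h\psi\cdot\grad_h)^*=-2\lap_h\psi-2\grad_h\psi\cdot\grad_h$ and the symmetrization in $i,j$ afforded by the real part to recognize $D^2(\psi)^{ij}=h^{il}h^{kj}\partial_l\partial_k\psi-\Lambda^{k,ij}\partial_k\psi$. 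The only difference is one of completeness: the paper carries out in full the bookkeeping you flag as the main difficulty (the $M_1+M_2+M_3$ splitting of the double commutator and the explicit recombination of the derivative-of-$h$ terms into $\Lambda^{k,ij}$), whereas you describe its outcome without executing it.
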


\begin{proof} Before we start, let us mention that $\lap_h$ and $\grad_h$ denote respectively the covariant Laplace-Beltrami operator and the gradient applied to a spinor. Therefore, we denote indifferently $\lap_h \psi$ for the scalar Laplace-Beltrami operator applied to $\psi$, and $\lap_h u$ the Laplace-Beltrami operator for Dirac bispinors applied to $u$. We also note that covariant derivatives sastisfy the Leibniz rule, as in
$$
D_j(\psi u) = \partial_j \psi u + \psi D_j u.
$$

First, the commutator between $L = \frac14\mathcal R_h +m^2- \lap_h$ and $\psi$ is given by
$$
[L, \psi] = [- \lap_h, \psi] = -\lap_h \psi - 2 \grad_h \psi \cdot \grad_h
$$
where $ \grad_h \psi \cdot \grad_h$ is the operator given by
$$
\grad_h \psi \cdot \grad_h u = h^{ij} \partial_i \psi D_j u.
$$
We deduce from that
$$
\phi \partial_t \Theta  = - \re \Big( \int_{\mathcal{M}_h} (\lap_h \psi) \overline u \phi \partial_t u + 2  \int_{\mathcal{M}_h} \grad_h \psi \cdot \grad_h \overline u \phi \partial_t u \Big).
$$

We now have that, since $m^2$ commutes with everything,
$$
[L,[L,\psi]] = [L, -\lap_h \psi] + [\frac14 \mathcal R_h , - 2 \grad_h \psi \cdot \grad_h]+ [\lap_h ,  2 \grad_h \psi \cdot \grad_h] = M_1 + M_2 + M_3.
$$

We have, in terms of distributions
$$
M_1 = (\lap_h)^2 \psi +2 \grad_h (\lap_h \psi) \cdot \grad_h \; ,\; M_2 =  \frac12 \grad_h \psi \cdot \grad_h \mathcal R_h 
$$
and hence 
$$
\an{M_1 u,u}_h = \int_{\mathcal{M}_h} (\lap_h )^2 \psi |u|^2 + \int_{\mathcal{M}_h} 2 \grad_h (\lap_h \psi) \cdot \grad_h \overline u u 
$$
and
$$
\an{M_2 u,u}_h = +\frac12 \int_{\mathcal{M}_h} \grad_h \psi \cdot \grad_h \mathcal R_h |u|^2.
$$

The term with $M_2$ is dealt with. We now deal with the term with $M_1$ that does not contain the bi-Laplace-Beltrami operator applied to $\psi$.

First, we compute $(2 \grad_h \varphi \cdot \grad_h)^*$, the symmetric of $2 \grad_h \varphi \cdot \grad_h$ for any $\varphi \in \mathcal C^0$ bounded. We have for any test functions $v,w \in \mathcal H^{10}$, by definition,
$$
\an{2 \grad_h \varphi \cdot \grad_h v, w}  =  2\int \sqrt{\det(h)} h^{ij} \partial_i \varphi D_j \overline v w d^3 x
$$
and since $D_j$ is skew-symmetric, 
$$
\an{2 \grad_h \varphi \cdot \grad_h v, w} =  2\int D_j \Big(\sqrt{\det(h)} h^{ij} \partial_i \varphi w \Big) \overline v  d^3 x
$$
and by the Leibniz rule,
$$
\an{2 \grad_h \varphi \cdot \grad_h v, w} = - 2\int \partial_j (\sqrt{\det(h)} h^{ij} \partial_i \varphi ) \overline v w d^3 x - 2 \int \sqrt{\det(h)} h^{ij} \partial_i \varphi \overline v D_j w 
$$
and finally, by the definition of $\lap_h$
$$
\an{2 \grad_h \varphi \cdot \grad_h v, w} =  - 2 \an{ v, (\lap_h \varphi) w }_h - 2 \an{v, \grad_h \varphi \cdot \grad_h w}_h
$$
in other words, $(2 \grad_h \varphi \cdot \grad_h)^* = -2 \lap_h \varphi - 2 \grad_h \varphi \cdot \grad_h$.

Note that, when we say symmetric, we always mean for the scalar product $\an{\cdot,\cdot}_h$.

This gives in particular, for $\varphi = \lap_h \psi$,
$$
\an{2 \grad_h (\lap_h \psi) \cdot \grad_h u, u} = - 2 \int_{\mathcal{M}_h} (\lap_h^2 \psi) |u|^2 -  \an{u,2 \grad_h (\lap_h \psi) \cdot \grad_h u}_h
$$
and thus
\begin{equation}\label{prevcomp}
\re \an{2 \grad_h (\lap_h \psi) \cdot \grad_h u, u} = -  \int_{\mathcal{M}_h} (\lap_h^2 \psi) |u|^2 
\end{equation}
which yields 
$$
\re \an{M_1 u,u}_h = 0.
$$
We deal with the $M_3$ term by directly taking the inner product. 
We have $M_3 = \lap_h (2 \grad_h \psi \cdot \grad_h)-(2 \grad_h \psi \cdot \grad_h)\lap_h $ and hence
$$
\an{M_3 u, u}_h = \an{\lap_h (2 \grad_h \psi \cdot \grad_h) u, u}_h - \an{(2 \grad_h \psi \cdot \grad_h)\lap_h u , u}_h
$$
and then, given the symmetry of $2 \grad_h \psi \cdot \grad_h$ (and the self-adjointness of the Laplace-Beltrami operator), 
$$
\an{(2 \grad_h \psi \cdot \grad_h)\lap_h u , u}_h = -\an{\lap_h u , 2 (\lap_h \psi) u }_h - \an{u,\lap_h (2 \grad_h \psi \cdot \grad_h) u}_h.
$$
Therefore, we get
$$
\re \an{M_3 u,u}_h = 2 \re \an{\lap_h (2 \grad_h \psi \cdot \grad_h) u, u}_h + 2 \re \an{\lap_h u , (\lap_h \psi) u }_h.
$$
The second term is given by
$$
\an{\lap_h u, \lap_h \psi u}_h = - \int_{\mathcal{M}_h} \grad_h u \cdot \grad_h(\lap_h \psi u)
$$
which decomposes, by the Leibniz rule, into
$$
\an{\lap_h u, \lap_h \psi u}_h = - \an{\grad_h(\lap_h \psi ) \cdot \grad_h u, u} - \int_{\mathcal{M}_h} \lap_h \psi \grad_h u \cdot \grad_h u
$$
and thanks to previous computations, \eqref{prevcomp}, we get
$$
\re \an{\lap_h u, \lap_h \psi u}_h = - \int_{\mathcal{M}_h} \lap_h \psi \grad_h u \cdot \grad_h u + \frac12\int_{\mathcal{M}_h} (\lap_h^2 \psi) |u|^2 .
$$
By summing up, we get
\begin{multline*}
\re \an{[L,[L,\psi]]u,u} =  \int_{\mathcal{M}_h} \Big( (\lap_h^2 \psi) + \frac12 \grad_h \psi \cdot \grad_h \mathcal R_h \Big) |u|^2  \\
-2 \int_{\mathcal{M}_h} (\lap_h \psi) \grad_h u \cdot \grad_h u + 2 \re \an{\lap_h (2 \grad_h \psi \cdot \grad_h) u, u}_h.
\end{multline*}
It remains to compute $\re I$, where
$$
 I = 2  \an{\lap_h (2 \grad_h \psi \cdot \grad_h) u, u}_h.
$$

Since for any test functions, $v,w$, we have $\an{\lap_h v,w}_h = -\an{\grad_h v,\grad_h w}$, we get
$$
I = -4 \int \sqrt{\det(h)} h^{ij} \overline{D_j \Big( h^{kl} \partial_k \psi  D_l  u\Big)}  D_i u d^3 x
$$
which decomposes, by the Leibniz rule, into
$$
I = -4 \int \sqrt{\det(h)} h^{ij} \partial_j ( h^{kl} \partial_k \psi)\overline{ D_l  u} D_i u d^3 x - 4 \int \sqrt{\det(h)} h^{ij} h^{kl} \partial_k \psi\overline{ D_l D_j  u} D_i u d^3 x.
$$
Let $II$ be the second term of the right hand side, that is, 
$$
II = - 4 \int\sqrt{\det(h)} h^{ij} h^{kl} \partial_k \psi \overline{D_l D_j  u} D_i u d^3 x.
$$
By integration by parts, we have
$$
II = 4 \int D_l \Big( \sqrt{\det(h)} h^{ij} h^{kl} \partial_k \psi D_i u \Big)  \overline{D_j u} d^3 x
$$
which decomposes into
$$
II = 4 \int \partial_l \Big( \sqrt{\det(h)} h^{ij} h^{kl} \partial_k \psi \Big) D_i u  \overline{D_j u} d^3 x -  \overline{II}.
$$
Hence,
$$
\re II = 2\int \partial_l \Big( \sqrt{\det(h)} h^{ij} h^{kl} \partial_k \psi \Big)  D_i u  \overline{D_j u} d^3 x .
$$
Therefore, by summing up
\begin{multline*}
\re I =  -4 \re \int\sqrt{\det(h)} h^{ij} \partial_j ( h^{kl} \partial_k \psi) \overline{D_l u} D_i u d^3 x 
\\
+ 2\int \partial_l \Big( \sqrt{\det(h)} h^{ij} h^{kl} \partial_k \psi \Big) D_i u   \overline{D_j u} d^3 x
\end{multline*}
that is
$$
\re I = \re \int_{\mathcal{M}_h} ( \overline{D_j u}  D_i u) D(\psi)^{ij}
$$
with 
$$
D(\psi)^{ij} = \frac2{\sqrt{\det(h)}} \partial_l \Big(\sqrt{\det(h)} h^{ij} h^{kl} \partial_k \psi \Big) - 4 h^{il} \partial_l ( h^{kj} \partial_k \psi).
$$
Since
$$
 \partial_l \Big(\sqrt{\det(h)} h^{ij} h^{kl} \partial_k \psi \Big) =  h^{ij}\partial_l \Big(\sqrt{\det(h)}  h^{kl} \partial_k \psi \Big) +  (\partial_l h^{ij}) \Big(\sqrt{\det(h)}  h^{kl} \partial_k \psi \Big),
 $$
we have 
$$
D(\psi)^{ij} = 2h^{ij} \lap_h \psi + 2h^{kl}\partial_k \psi \partial_l h^{ij} - 4 h^{il} \partial_l (h^{kj} \partial_k \psi).
$$

To sum up, we get
\begin{multline*}
(\phi \partial_t)^2 \Theta =  -\int_{\mathcal{M}_h} \Big(\frac12 (\lap_h^2 \psi) + \frac14 \grad_h \psi \cdot \grad_h \mathcal R_h \Big) |u|^2 + \int_{\mathcal{M}_h} (\lap_h \psi) \grad_h u \cdot \grad_h u \\
 - \frac12 \re \int_{\mathcal{M}_h} ( \overline{D_j u}  D_i u) D(\psi)^{ij}.
\end{multline*}

We get
$$
(\phi \partial_t)^2 \Theta = -\int_{\mathcal{M}_h} \Big(\frac12 (\lap_h^2 \psi) + \frac14 \grad_h \psi \cdot \grad_h \mathcal R_h \Big) |u|^2 + \frac12 \re \int_{\mathcal{M}_h} (\partial_j \overline u  \partial_i u) D_1(\psi)^{ij}
$$
with 
$$
D^1(\psi)^{ij} = 2 \lap_h \psi h^{ij} - D(\psi)^{ij}
$$
that is, by definition,
$$
D^1(\psi)^{ij} = -2h^{kl}\partial_k \psi \partial_l h^{ij} +4h^{il}\partial_l (h^{kj}\partial_k \psi)
$$
and by the Leibniz rule :
$$
D^1(\psi)^{ij}= 4 h^{il} h^{kj} \partial_l \partial_k \psi + 2 \partial_k \psi (- h^{kl} \partial_l h^{ij}  + 2 h^{il} \partial_l h^{kj}).
$$

Thanks to the real part, we have a symmetry in $i$ and $j$. Indeed, 
$$
\re \Big( \overline{D_j u}D_i u D_1(\psi)^{ij}\Big) = \re \Big( \overline{D_i u} D_j u D_1(\psi)^{ij}\Big).
$$
Thus, we can replace $D^1(\psi)^{ij}$ by $\frac12 \Big( D^1(\psi)^{ij} + D^1(\psi)^{ji}\Big)$, which yields
$$
\frac12 \re \int_{\mathcal{M}_h} ( \overline{D_j u}  D_i u) D^1(\psi)^{ij} = 2  \int_{\mathcal{M}_h} ( \overline{D_j u}  D_i u) D^2(\psi)^{ij}
$$
with 
$$
D^2(\psi)^{ij} =   h^{il} h^{kj} \partial_l \partial_k \psi + \frac12\partial_k \psi (- h^{kl} \partial_l h^{ij}  +  h^{il} \partial_l h^{kj} + h^{jl} \partial_l h^{ki}).
$$
We recognize the affine connection or Christoffel symbol
$$
 \Lambda^{k,ij} = h^{il}h^{jm} \Lambda^k_{lm} =\frac12 \Big( h^{kl} \partial_l h^{ij}  -  h^{il} \partial_l h^{kj} - h^{jl} \partial_l h^{ki} \Big),
$$
which yields
\begin{equation}\label{2diffform}
D^2(\psi)^{ij} =   h^{il} h^{kj} \partial_l \partial_k \psi -\Lambda^{k,ij} \partial_k \psi.
\end{equation}

Finally, we get the virial identity,
\begin{multline*}
-\int_{\mathcal{M}_h} \Big(\frac12 (\lap_h^2 \psi) + \frac14 \grad_h \psi \cdot \grad_h \mathcal R_h \Big) |u|^2 + 2  \int_{\mathcal{M}_h} (\overline{D_j u}  D_i u) D^2(\psi)^{ij}\\
= -(\phi \partial_t) \re \Big( \int_{\mathcal{M}_h} (\lap_h \psi) \overline u \phi \partial_t u + 2  \int_{\mathcal{M}_h} \grad_h \psi \cdot \grad_h \overline u \phi \partial_t u \Big).
\end{multline*}

\end{proof}

\section{The asymptotically flat case}\label{locsmot1}

The proof of Theorem \eqref{locsmoothteo} is fairly classical in this setting (see e.g. \cite{boussdanfan, cacdanluc}); nevertheless, before getting into details, let us give a brief sketch of it in order to make the various steps easier to be followed. The idea is to rely on virial identity \eqref{virialid}, plug in it a proper choice of the multiplier (with a fixed $R>0$) that we will define in subsection \ref{mult}, integrate in time and carefully estimate all the terms. By multiplier we mean the function $\psi$. We will start from the Right Hand Side: making use of a modified Hardy inequality, that will be proved in the next subsection, will allow us to estimate from above with some energy-type terms at some fixed times $0$ and $T$. Then, we will have to bound the Left Hand Side from below, which will be significantly more involved. Here we will make heavy use of our asymptotic-flatness (and smallness) assumptions to prove estimates of the different terms and, roughly speaking, treat the non-flat ones as perturbations. To absorb them,
 it will be necessary to take the 
$\sup$ in $R>0$: this will prevent us from exchanging the time and space norms in the Left Hand Side of \eqref{locsmoothest}. Eventually, we will take the sup in time and use conservation of energy.

\subsection{Useful inequalities}
We start by proving some Hardy-type and weighted estimates that will be needed in the proof of Theorem \ref{locsmoothteo}, i.e. in the asymptotically flat case. 

\begin{proposition}\label{hardyprop}
Let $m\geq0$ and assume that $h$ satisfies assumptions \eqref{eq:assaabove}, \eqref{assasy} and \eqref{assdera} with the constants $C_I$ and $C_h$ sufficiently small.
 Then for any $u$ such that $Hu\in L^2(\mathcal{M}_h)$ the following inequality holds
\begin{equation}\label{hardy}
m^2\int_{\mathcal{M}_h}|u|^2+\left[\frac{\nu^4}4-K_h\right]\int_{\mathcal{M}_h}\frac{|u|^2}{|x|^2}\leq \int_{\mathcal{M}_h}|Hu|^2
\end{equation}
for some small constant $K_h$ depending on $C_I$, $C_h$.
\end{proposition}

\begin{proof} We assumed that $g$ was complete, such that the Dirac operator $\gamma^0 \mathcal D$ is self-adjoint (see \cite{selfa}). Since $\gamma^0\mathcal D = i\phi \partial_t - H - \gamma^0 m$, we get that $H$ is essentially self-adjoint on $C^\infty_0$.

We write, as the operator $H$ is self-adjoint with respect to the inner product defined by $h$, 
\begin{equation}\label{fornext}
\int_{\mathcal{M}_h}|Hu|^2=\int_{\mathcal{M}_h}(m^2-\Delta_h) u+\frac14\int_{\mathcal{M}_h}\mathcal R_h|u|^2=I+II.
\end{equation}
Notice now that
\begin{equation}\label{I}
I=\int_{\mathcal{M}_h}\grad_hu\cdot\grad_hu + m^2\int_{\mathcal{M}_h} |u|^2.
\end{equation}
As, we recall, the operator $\grad_h u \cdot \grad_h$ denotes $h^{ij} \overline{D_i f} D_j $ where $D_i$ is the covariant derivative for spinors widely discussed in section \ref{curv} we can write
$$
\int_{\mathcal{M}_h}\grad_hu\cdot\grad_hu=\int_{\mathcal{M}_h}h^{ij}\overline{\partial_iu}\partial_ju +\int_{\mathcal{M}_h}h^{ij}\overline{B_iu} B_j u-
\int_{\mathcal{M}_h}h^{ij}(\partial_i B^j)|u|^2=I_1+I_2-I_3,
$$
where we are denoting for brevity with $B$ the field that defines the covariant derivative $D_j$ (recall \eqref{defcov} and \eqref{covvier}). Now, we rely on the asymptotically flat structure of the metrics $h$ to estimate the single terms: from \eqref{eq:assaabove} we directly have
$$
\int_{\mathcal{M}_h}h^{ij}\overline{\partial_iu}\partial_ju\geq \nu^{5/2}\int_{\mathbb{R}^3}|\grad u|^2
$$
which, by the application of standard Hardy's inequality
$$
\int_{\mathbb{R}^3}\frac{|u|^2}{|x|^2}\leq 4\int_{\mathbb{R}^3} |\grad u|^2
$$
gives
$$
I_1\geq \frac{\nu^4}4\int_{\mathcal{M}_h}\frac{|u|^2}{|x|^2}.
$$
This is to be thought of, somehow, as the "leading" term, the other ones being small perturbations of it.
The term $I_2$ is strictly positive, and therefore we can neglect it; to control $I_3$ we need instead to give an estimate for the term $\partial_i B^j$. We have shown that $B_j=\alpha_j^{ab} = f_i^a \partial_j f^{ib} + f_i^a \Lambda_{jk}^i f^{kb}$, where $f^a_b$ denotes the dreibein that connects the metrics $h$ with the flat metrics and $\Lambda_{jk}^i$ are the Christoffel symbols. Since the dreibein are constructed such that they satisfy the relation $h^{ij} = f_a^i\delta^{ab}f_b^j$, it is clear that one can bound, in the sense of the matrices, the square of $f$ with $h$; therefore, estimates \eqref{eq:assaabove}, \eqref{assasy} and \eqref{assdera} hold for the matrices $f$ with the constant $\sigma$ replaced by $\sigma/2$. After differentiating and some computations one then gets the  estimate
$$
I_3=\int_{\mathcal{M}_h}h^{ij}(\partial_i B^j)|u|^2\leq  C_1(C_h,C_I,\nu,N)\int_{\mathcal{M}_h}
\frac{|u|^2}{\langle x\rangle^{2+\tilde{\sigma}}}
$$
for some $\tilde{\sigma}>0$, and thus
$$
I_3\leq  C_1(C_h,C_I,\nu,N)\int_{\mathcal{M}_h}
\frac{|u|^2}{|x|^2}.
$$
For the term II, in an analogous fashion, we can estimate each term in the curvature by using assumptions \eqref{eq:assaabove}, \eqref{assasy} and \eqref{assdera} to eventually obtain
\begin{equation}\label{II}
II\leq C_2(C_h,C_I,\nu,N)\int_{\mathcal{M}_h}\frac{|u|^2}{|x|^2}.
\end{equation}
We should stress indeed the fact that \eqref{assdera} holds for the matrix $h$ as well, due to the well known relation $\partial_{x_j} h^{inv}=-h^{inv}(\partial_{x_j} h) h^{inv}$, with the modified constant $\nu^{-2}C_h$, and thus
$$
|h^{inv}(\partial h^{inv})\partial h|\leq \frac{N}{\nu^2}\frac{C_h^2}{\langle x\rangle^{2+2\sigma}},
$$
$$
|h^{inv}(\partial\partial h)|\leq \frac{N^2}{\nu^4}\frac{C_h}{|x|\langle x\rangle^{1+1\sigma}},
$$
$$
|h^{inv}(h^{inv}\partial h)^2|\leq \frac{N^4}{\nu^4}\frac{C_h}{\langle x\rangle^{2+2\sigma}}.
$$

Putting all together, we thus have
$$
\int_{\mathcal{M}_h}|Hu|^2\geq \left[\frac{\nu^4}4-C_1-C_2\right]\int_{\mathcal{M}_h}\frac{|u|^2}{|x|^2}+m^2\int_{\mathcal{M}_h}|u|^2.
$$
Notice that, even though we did not write it explicitly in order to keep the presentation as light as possible, it is possible to take the constants $C_1$ and $C_2$ in order to have $K_h:=C_1+C_2<\frac{\nu^4}4$ provided the constants $C_h$ and $C_I$ are small enough, and this concludes the proof.
%
\end{proof}

\begin{remark}
In what follows we will make also use of the following estimate, which holds for any $\varepsilon\in (0,1)$,
\begin{equation}\label{hardy2}
m^2\int_{\mathcal{M}_h}|u|^2+\left[\frac{(1-\varepsilon)\nu^4}4-K_h\right]\int_{\mathcal{M}_h}\frac{|u|^2}{|x|^2}+\varepsilon\int_{\mathcal{M}_h} |\grad u|^2\leq \int_{\mathcal{M}_h}|H u|^2
\end{equation}
that can be obtained by combining \eqref{hardy} with the obvious inequality
$$
\varepsilon\int_{\mathcal{M}_h} |\grad u|^2+\frac{(1-\varepsilon)}4\nu^4\int_{\mathcal{M}_h}\frac{|u|^2}{|x|^2}\leq\int_{\mathcal{M}_h}|\grad u|^2.
$$
\end{remark}
%

In the following Proposition we collect a number of other weighted inequalities that will be needed in the proofs of the main results.

\begin{proposition}
For any $\sigma\in (0,1)$ and any $u\in C^\infty_0(\mathbb{R}^n)$ the following estimates hold
\begin{equation}\label{prev1}
\int_{\mathcal{M}_h}\frac{|f|^2}{\langle x\rangle^{1+\sigma}}\leq 8\sigma^{-1}C_{N,\nu}\|u\|_{Y}^2,
\end{equation}
\begin{equation}\label{prev2}
\sup_{R>1}\int_{\mathcal{M}_h\cap B_R^c}\frac{R^2}{|x|^5}|u|^2\leq C_{N,\nu} \|u\|_X^2,
\end{equation}
\begin{equation}\label{prev3}
\int_{\mathcal{M}_h\cap B_1^c}\frac{|u|^2}{|x|^2\langle x\rangle^{1+\sigma}}\leq 2\sigma^{-1}C_{N,\nu}\|u\|_X^2.
\end{equation}
\begin{equation}\label{prev4}
\|u\|_{X}^2\leq C_{N,\nu} \left[4\sup_{R>1}\frac1{R^2}\int_{\mathcal{M}_h\cap S_R} |u|^2+13\|\grad u\|_Y^2\right]
\end{equation}
\begin{equation}\label{prev5}
\| u\|_{Y}^2\leq 3C_{N,\nu} \left(2\|\grad u\|_Y^2+\|u\|_{X}^2\right)
\end{equation}
where the constant $\displaystyle C_{N,\nu}= \left(\frac{\nu}N\right)^{3/2}$.
\end{proposition}

\begin{proof}
For the proof we refer to \cite{cacdanluc} section 3: the generalization from the Euclidean case to our perturbative setting is straightforward under assumptions \eqref{eq:assaabove}-\eqref{detest}.
\end{proof}


\subsection{Choice of the multiplier}\label{mult} We here present the multiplier function $\psi$ that will be used in the main proof. We define the radial function $\psi(x)$ as
$$
\psi_0(x)=\int_0^{r}\psi_0'(s)ds
$$
where
$$
\psi_0'(r)=\begin{cases}
\displaystyle
\frac{r}3,\quad r\leq1\\
\displaystyle
\frac12-\frac1{6r^2},\quad r>1
\end{cases}
$$
(with a slight abuse we are using the same notation for $\psi(x)$ and $\psi(r)$ where $r=|x|$).
We then define the scaled function
$$
\psi_R(r):= R\psi_0\left(\frac{r}{R}\right),\qquad R>0
$$
for which we have
$$
\psi_R'(r)=\begin{cases}
\displaystyle
\frac{r}{3R},\quad r\leq R\\
\displaystyle
\frac12-\frac{R^2}{6r^2},\quad r>R.
\end{cases}
$$
Moreover, 
$$
\psi_R''(r)=\begin{cases}
\displaystyle
\frac{1}{3R},\quad r\leq R\\
\displaystyle
\frac{R^2}{3r^3},\quad r>R.
\end{cases}
$$
$$
\psi_R'''(r)=-\frac{R^2}{r^4}{\bf 1}_{r\geq R},
$$
$$
\psi_R^{iv}=4\frac{R^2}{r^5}{\bf 1}_{r\geq R}-\frac1{R^2}\delta(r-R)
$$
where ${\bf 1}_{r\geq R}$ denotes the characteristic function of the set $\{x:|x|\geq R\}$ and $\delta$ the standard Dirac delta distribution.
Notice that, for every $r\geq0$,
\begin{equation*}
\psi_R''(r)\leq \frac{1}{2\max\{R,r\}}\leq \frac1{2r},\qquad \psi_R'(r)\leq \frac12.
\end{equation*}
Moreover, notice that
\begin{equation}\label{dirder}
\psi_R''-\frac{\psi_R'}{r}=\begin{cases}
\displaystyle
0,\quad r\leq R\\
\displaystyle
-\frac{1}{2r^2}\left(1-\frac{R^2}{r^2}\right),\quad r>R.
\end{cases}
\end{equation}
In the following we shall simply denote with $\psi=\psi_R$ for a fixed $R$. 

\begin{remark}
Our choice of the multiplier $\psi$ here is classical, and already highly used in several papers to prove smoothing estimates for different dispersive equations, also in some perturbative settings. The function $\psi$ is a mix of the so called \emph{virial} and \emph{Morawetz} multipliers, which are respectively given by $|x|$ and $|x|^2$; originally, the choice of such a function was dictated by the conditions of having a negative bi-Laplacian and a positive Hessian, together with some good decay at infinity. 
Of course, in a fully variable coefficients setting, this properties are much more difficult to be fulfilled, and a smart choice of the multiplier, to the best of our knowledge, has never been attempted in this general case. Therefore our choice is motivated by perturbative arguments: the idea of the proof will be that the "leading" terms in the inequality will mainly recover the ones in the flat case, while the terms  involving the variable coefficients will be treated by "smallness" arguments.
\end{remark}

\subsection{Estimate of the right hand-side (RHS)}
We use the Dirac equation \eqref{diraceqcurv} to rewrite the right hand side of \eqref{virialid} as (notice that the mass term vanishes when taking the real part)
\begin{equation}\label{rhss}
(\phi\partial_t){\rm Re} \left(i\int_{\mathcal{M}_h}(\Delta_h\psi) Hu \: \overline{u}+2\int_{\mathcal{M}_h}\grad_h\psi\cdot \grad_h\overline{u}\:Hu\right).
\end{equation}
First of all, by the application of Young's inequality we can write the estimate
\begin{equation}\label{rhsest}
\left|\int_{\mathcal{M}_h}\big[(\Delta_h\psi) Hu \: \overline{u}+2\grad_h\psi\cdot \grad_h\overline{u}\:Hu\big]\right|
\end{equation}
\begin{equation*}
\leq
\frac32\|Hu\|_{L^2(\mathcal{M}_h)}^2+\|\grad_h\psi\cdot \grad_h\overline{u}\|_{L^2(\mathcal{M}_h)}^2+\frac12\|\Delta_h\psi\:\overline{u}\|_{L^2(\mathcal{M}_h)}^2.
\end{equation*}
Recalling \eqref{lappsi}, \eqref{detest} and \eqref{hardy} we then have
\begin{eqnarray}\label{rhs1}
\|\Delta_h\psi\:\overline{u}\|_{L^2(\mathcal{M}_h)}^2&\leq& \frac{3N}4\left\|x|^{-1}u\right\|_{L^2(\mathcal{M}_h)}^2+\frac{C_\grad}{4\nu^{3/2}}\left\|\langle x\rangle^{-1-\alpha}u\right\|_{L^2(\mathcal{M}_h)}^2
\\
\nonumber
&\leq& C\left(\frac{3N}4+\frac{C_\grad N^{3/2}}{4}\right)\|Hu\|_{L^2(\mathcal{M}_h)}
\end{eqnarray}
where the constant $C$ above will depened on $C_I$ and $C_h$.
Moreover, due to condition \eqref{assdera}, we have
$$
|\grad_h\psi\cdot\grad_h \overline{u}|\leq N/2|\grad u|,
$$
so that applying estimate \eqref{hardy2} we obtain
\begin{equation}\label{rhs2}
\|\grad_h\psi\cdot \grad_h\overline{u}\|_{L^2(\mathcal{M}_h)}^2\leq C\|Hu\|_{L^2(\mathcal{M}_h)}^2
\end{equation}
with a constant $C$ depending on $N$, $\nu$, $C_I$ and $C_h$.
Therefore, multiplying \eqref{rhs1} times $\phi^{-1}$ and integrating in time between $0$ and $T$ we obtain,
plugging \eqref{rhs1} and \eqref{rhs2} into \eqref{rhsest},
\begin{equation}\label{rhsfinal}
\left|\int_0^T \phi^{-1}(t)(\phi(t)\partial_t)\int_{\mathcal{M}_h}\big[(\Delta_h\psi) Hu \: \overline{u}+2\grad_h\psi\cdot \grad_h\overline{u}\:Hu\big]\right|
\end{equation}
\begin{equation*}
\lesssim \|Hu(T)\|_{L^2(\mathcal{M}_h)}^2+\|Hu(0)\|_{L^2(\mathcal{M}_h)}^2.
\end{equation*}

\subsection{Estimate of the left hand-side (LHS)}
We now deal with the left hand side of identity \eqref{virialid}. We estimate each term separately, and start with the one involving the gradient, namely
\begin{equation}\label{firsterm}
2  \int_{\mathcal{M}_h} ( \overline{D_j u}  D_i u) D^2(\psi)^{ij}.
\end{equation}
Recalling \eqref{2diffform}, we treat separately terms involving derivatives on the coefficients from the others. Concerning $\Lambda^{k,ij}$ we have 
$$
|(\overline{D_j u}  D_i u)\Lambda^{k,ij}|\leq 3 |h_{inv}| |h_{inv}'| |\grad_h u|^2|\psi'|
$$
and thus, by our assumptions \eqref{assdera} and from the bound on $\psi'$, 
\begin{equation}\label{RRR}
|(\overline{D_j u}  D_i u)\Lambda^{k,ij}|\leq \frac32 N C_h\langle x\rangle^{-1-\sigma}|\grad_h u|^2.
\end{equation}
Turning to the other term, we use the fact that $\psi$ is radial to rewrite it as follows
$$
h^{il} h^{kj} \partial_l \partial_k \psi=h^{il} h^{kj} \hat{x}_l\hat{x}_k\left(\psi''-\frac{\psi'}{|x|}\right)
+h^{il}h^{jl}\frac{\psi'}{|x|}.
$$
We restrict the quantity above first in the region $|x|\leq R$ where, notice, $\psi''=\frac{\psi'}{|x|}$. Therefore,
\begin{eqnarray}\label{inR}
\nonumber
{\bf 1}_{|x|\leq R}(\overline{D_j u}  D_i u)h^{il} h^{kj} \partial_l \partial_k \psi&=&
\frac1{3R}{\bf 1}_{|x|\leq R} h^{il}h^{jl}(\overline{D_j u}  D_i u)
\\
&\geq&
\frac{\nu^2}{3R}{\bf 1}_{|x|\leq R}|\grad_h u|^2
\end{eqnarray}
where in the last inequality we have used \eqref{eq:assaabove}. In the region $|x|> R$ we have instead
\begin{equation}\label{outR}
{\bf 1}_{|x|> R}h^{il} h^{kj} \partial_l \partial_k \psi=
\frac1{2|x|}\left[h^{il}h^{jl}-h^{il}h^{kj}\hat{x}_l\hat{x}_k\right]+\frac{R^2}{2|x|^3}\hat{x}_l\hat{x}_kh^{ik}h^{jl}-\frac{R^2}{6|x|^3}h^{il}h^{jl}\geq0
\end{equation}
in the sense of matrices (notice that $h^{il}h^{jl}-h^{il}h^{kj}\hat{x}_l\hat{x}_k\geq 0$ in the sense of matrices). We can therefore neglect this term.

We thus multiply \eqref{firsterm} by $\phi^{-1}$, and integrate in time between $0$ and $T$. Exchanging the integrals and applying \eqref{RRR}, \eqref{inR} and \eqref{outR} therefore gives (recall \eqref{prev1})
\begin{equation}\label{termgrad}
2  \int_{\mathcal{M}_h}\phi^{-1} \int_0^T(\partial_j \overline u  \partial_i u) D^2\psi^{ij}
\geq
\frac{2\nu^2}{3R}\int_{\mathcal{M}_h\cap B_R}\|\grad u\|_{L^2_{\phi,T}}^2-C_{D^2}\|\grad u\|^2_{YL^2_{\phi,T}}
\end{equation}
with the constant
\begin{equation}\label{C}
C_{D^2}=\frac{12\nu^{3/2}C_h}{\sqrt{N}\sigma}
\end{equation}
Now we turn to the bi-Laplacian term, that is
\begin{equation}\label{bilterm}
\frac12\int_{\mathcal{M}_h} (\lap_h^2 \psi)|u|^2.
\end{equation}
First of all observe that
$$
\Delta_h(fg)=(\Delta_hf)g+2\grad_hf\cdot\grad_hg+(\Delta_hg)f,
$$
so that we can write, after some manipulations
$$
\Delta_h^2\psi=\Delta_h\Delta_h\psi=I+II+III+IV
$$
with
$$
I=\hat{h}\cdot \Delta_h\psi''+(\overline{h}-\hat{h})\Delta\left(\frac{\psi'}{|x|}\right),
$$
$$
II=A\hat{h}\cdot \psi''+A(\overline{h}-\hat{h})\cdot\frac{\psi'}{|x|},
$$
$$
III=2\grad_h\hat{h}\cdot \grad_h\psi''+2\grad_h(\overline{h}-\hat{h})\cdot\grad_h\frac{\psi'}{|x|},
$$
$$
IV=\Delta_h\left(\frac{1}{\sqrt{\det(h)}}\partial_j(\tilde{h}^{jk}\hat{x}_k\psi')\right).
$$
We separate terms involving derivatives on the coefficients of $h^{jk}$ (which will be of perturbative nature) from the others. After some long winded but not difficult computations (see \cite{cacdanluc} section 4.4 for further details)
one gets
$$
\Delta_h^2\psi=S(x)+R(x)
$$
\begin{equation*}
\begin{split}
  S(x)=&
  \textstyle
  \widehat{h}^{2}{\psi^{iv}}+
  2\widehat{h}(\overline{h}-\widehat{h})
  \frac{\psi'''}{|x|}+
  \frac{(\overline{h}-\widehat{h})(\overline{h}-3 
  \widehat{h})}{|x|^{2}}
  \left(
  \psi''-\frac{\psi'}{|x|}
  \right)+
    \\
  &+
  \textstyle
  \frac{2}{|x|^{2}}
  [
  h^{\ell m}h^{\ell m}
  -\overline{h}\widehat{h}
  -4(|h \widehat{x}|^{2}-\widehat{h}^{2})
  ]
  \left(
  {\psi''}-\frac{\psi'}{|x|}
  \right)+
    \\
  &
  \textstyle
  +\frac{4}{|x|}[|h \widehat{x}|^{2}-\widehat{h}^{2}]
  \left(
  {\psi'''}
  -\frac{\psi''}{|x|}
  +\frac{\psi'}{|x|^{2}}
  \right)
\end{split}
\end{equation*}
and
\begin{equation*}
  \textstyle
\begin{split}
  \sqrt{\det(h)}R(x)=
  &
  \widehat{h}\partial_m(\tilde{h}^{\ell m})\widehat{x}_{m}{\psi'''}+
  (\overline{h}-\widehat{h})
 \partial_k(\tilde{h}^{jk})\widehat{x}_{k}
  \textstyle
  \left(
  \frac{\psi''}{|x|}-\frac{\psi'}{|x|^{2}}
  \right)+
    \\
  &+
    [
  \partial_{j}(\tilde{h}^{jk}\partial_k (h^{\ell m})\widehat{x}_{\ell}
  \widehat{x}_{m})
  +\partial_{j}(\tilde{h}^{jk}h^{\ell m})\partial_{k}(\widehat{x}_{\ell}
      \widehat{x}_{m})
  ]
  \textstyle
  \left(
  {\psi''}-\frac{\psi'}{|x|}
  \right)
     \\
  & +
  \sqrt{\det(h)}(\Delta_h\overline{h})\frac{\psi'}{|x|}
+
    2\sqrt{\det(h)}h^{jk}\partial_kh^{\ell m}\widehat{x}_{\ell}\widehat{x}_{m}
    \widehat{x}_{j}
    \textstyle
    \left({\psi'''}-\frac{\psi''}{|x|}\right)
    +    \\ 
  &+
  \textstyle
  2\sqrt{\det(h)}h(\grad \overline{h},\grad \frac{\psi'}{|x|})+
\sqrt{\det(h)}\Delta_h\left(\frac{1}{\sqrt{\det(h)}}\partial_j(\tilde{h}_{jk}\hat{x}_k\psi')\right).
\end{split}
\end{equation*}
In our assumptions on the metric $h$ and noticing that by the definition of $\psi$ we have
\begin{equation*}
  \textstyle
  |\psi'|\le \frac{|x|}{2 (R\vee|x|)},\qquad
  |\psi''|\le \frac{n-1}{2n (R\vee|x|)},\qquad
  |\psi'''|\le \frac{n-1}{2 (R\vee|x|)|x|},
\end{equation*}
the remainder term $R(x)$ can be estimated as
\begin{equation}\label{rest}
|R(x)|\leq \frac{36 C_h(N+C_h)}{|x|\langle x\rangle^{1+\sigma}\max\{R,|x|\}}.
\end{equation}
We regroup the terms in $S(x)$ to write
\begin{eqnarray}
\label{Srev}
S(x)&=&\hat{h}^2\psi^{iv}+\left(2\overline{h}\hat{h}-6\hat{h}^2+4|h\hat{x}|^2\right)\frac{\psi'''}{|x|}
\\
\nonumber
&+&
\left(2h^{\ell m}h^{\ell m}+\overline{h}^2-6\overline{h}\hat{h}+15 \hat{h}^2-12|h\hat{x}|^2\right)\left(\frac{\psi''}{|x|^2}-\frac{\psi'}{|x|^3}\right).
\end{eqnarray}
Now, plugging our choice of the weight into \eqref{Srev} gives (recall \eqref{dirder}
\begin{equation*}
S(x)=-\frac1{R^2}\hat{h}^2\delta(|x|-R)
\end{equation*}
for $r\leq R$ and, after rearranging the terms,
$$
S(x)=2\left(3\hat{h}-\overline{h}\right)\hat{h}\frac{R^2}{|x|^5}-6(|h\hat{x}|^2-\hat{h}^2))\frac{R^2}{|x|^5}
$$
$$
-\left(2h^{\ell m}h^{\ell m}+\overline{h}^2-6\overline{h}\hat{h}+15 \hat{h}^2-12|h\hat{x}|^2\right)
\left(1-\left(\frac{R}{|x|}\right)^2\right)\frac1{2|x|^3}
$$
for $|x|>R$. As we can write $h_{inv}(x)=I+\varepsilon(x)$  (meaning $\varepsilon_{jk}=h^{jk}-\delta_{jk})$, we have 
$$
h^{\ell m}h^{\ell m}=\delta_{\ell m\ell m}+2\delta_{\ell m}\varepsilon_{\ell m}+\varepsilon_{\ell m}\varepsilon_{\ell m}=3+2\overline{\varepsilon}+\varepsilon_{\ell m}\varepsilon_{\ell m}
$$
as well as
$$
\hat{h}=1+\hat\varepsilon,\qquad \overline{a}+\overline{h},\qquad |h\hat{x}|^2=1+2\hat{\varepsilon}+|\varepsilon\hat{x}|^2.
$$
Notice also that by assumption \eqref{assasy} $|\varepsilon(x)|=|h_{inv}(x)-I|\leq C_I\langle x\rangle^{-\sigma}<1$ and therefore
$$
|\overline\varepsilon|\leq 3C_I\langle x\rangle^{-\sigma},\qquad |\hat{\varepsilon}|\leq C_I\langle x\rangle^{-\sigma},\qquad |\varepsilon \hat{x}|\leq C_I\langle x\rangle^{-\sigma}
$$
so that 
\begin{eqnarray}\label{appt}
\nonumber
2h^{\ell m}h^{\ell m}+\overline{h}^2-6\overline{h}\hat{h}+15 \hat{h}^2-12|h\hat{x}|^2&=&
4\overline{\varepsilon}-12\hat{\varepsilon}+2\varepsilon_{\ell m}\varepsilon_{\ell m}+\overline{\varepsilon}^2
\\
\nonumber
&&-6\overline{\varepsilon}\hat{\varepsilon}+15\hat{\varepsilon}^2-12|\varepsilon \hat{x}|^2
\\
\nonumber
&\geq&
4\overline{\varepsilon}-12\hat{\varepsilon}-6\overline{\varepsilon}\hat{\varepsilon}-12|\varepsilon \hat{x}|^2
\\
&\geq&
-46 C_I\langle x\rangle ^{-\sigma}.
\end{eqnarray}
Also, as $1-C_I\leq \hat{h}\leq 1+C_I$, we have
\begin{equation*}
-\hat{h}^2\leq -(1-C_I)^2,
\end{equation*}
and
\begin{equation}\label{apprev}
 \left(3\hat{h}-\overline{h}\right)\hat{h}\leq 6C_I(1+C_I)\leq 12C_I.
\end{equation}
Therefore, under our assumptions and with our choice of the multiplier $\psi$, we obtain the estimates
$$
S(x)\leq -(1-C_I)^2\frac1{R^2}\delta(|x|-R)\qquad {\rm for}\;|x|\leq R,
$$
and, with a bit more careful computations that essentially rely on \eqref{appt} and \eqref{apprev}
$$
S(x)\leq 24 C_I\left[\frac{R^2}{|x|^5}+\frac1{|x|^3\langle x\rangle^\sigma}\right]\qquad {\rm for}\;|x|> R.
$$
We now multiply times $\phi^{-1}$ and integrate in time \eqref{bilterm} from $0$ to $T$: this gives
$$
-\int_0^T\phi^{-1}\int_{\mathcal{M}_h} \Delta_h^2\psi|u|^2=-\int_{\mathcal{M}_h}\Delta_h^2\psi\|u\|_{L^2_{\phi,T}}^2=I+II
$$
with
$$
I=-\int_{\mathcal{M}_h}S(x)\|u\|_{L^2_{\phi,T}}^2,\qquad II=-\int_{\mathcal{M}_h}R(x)\|u\|_{L^2_{\phi,T}}^2
$$
and estimate the two terms separately. For the $S(x)$ term we get, thanks to \eqref{prev2} and \eqref{prev3},
$$
I\geq(1-C_I)^2\frac1{R^2}\int_{\mathcal{M}_h\cap S_R}\|u\|_{L^2_{\phi,T}}^2-\frac{72C_I}\sigma\left(\frac{\nu}N\right)^{3/2}\|u\|_{XL^2_{\phi,T}}^2.
$$
The $R(x)$ term can be instead estimated with
$$
II\geq -36C_h(N+C_h)\int_0^T\left[\int_{\mathcal{M}_h\cap B_R}+\int_{\mathcal{M}_h\cap B_R^c}\right]\frac{|u|^2}{\phi(t)|x|^2\langle x\rangle^{1+\sigma}}
$$
where $B_R^c$ is the complementary set of $B_R$, that is the region where $r>R$.
Thanks to \eqref{prev3} we have
\begin{equation}\label{R_2}
\int_0^T\int_{\mathcal{M}_h\cap B_R}\frac{|u|^2}{\phi(t)|x|^2\langle x\rangle^{1+\sigma}}\leq \int_{\mathcal{M}_h\cap B_R}\frac{\|u\|_{L^2_{\phi,T}}^2}{|x|^2\langle x\rangle^{1+\sigma}}\leq \frac2{\sigma}\left(\frac{\nu}N\right)^{3/2}\|u\|_{XL^2_{\phi,T}}^2
\end{equation}
and thanks to \eqref{1norm} and \eqref{hardy}
\begin{equation}\label{R_3}
\int_0^T\int_{\mathcal{M}_h\cap B_1}\frac{|u|^2}{\phi(t)|x|^2\langle x\rangle^{1+\sigma}}\leq
\int_0^T\int_{\mathcal{M}_h\cap B_1}\frac{|u|^2}{\phi(t)|x|^2}\leq 4\|\grad u\|^2_{L^2(\mathcal{M}_h\cap B_1)L^2_{\phi,T}}.
\end{equation}
From \eqref{R_2} and \eqref{R_3} we thus obtain
\begin{equation*}
II\geq -\frac{324 C_h(N+C_h)}\sigma\left(\frac{\nu}N\right)^{3/2}\left(\|u\|_{XL^2_{\phi,T}}^2+\|\grad u\|^2_{L^2(\mathcal{M}_h\cap B_1)L^2_{\phi,T}}\right).
\end{equation*}
Putting all together gives
\begin{equation*}
-\frac12\int_0^T\phi^{-1}\int_{\mathcal{M}_h} \Delta_h^2\psi|u|^2\geq
\frac{(1-C_I)^2}2\frac1{R^2}\int_{\mathcal{M}_h\cap S_R}\|u\|_{L^2_{\phi,T}}^2
\end{equation*}
\begin{equation*}
-\frac1\sigma\left(\frac{\nu}N\right)^{3/2}\left[
36C_I\|u\|_{XL^2_{\phi,T}}^2+162C_h(N+C_h)\left(\|u\|_{XL^2_{\phi,T}}^2+\|\grad u\|^2_{L^2(\mathcal{M}_h\cap B_1)L^2_{\phi,T}}\right)
\right]
\end{equation*}
Recalling \eqref{1norm} eventually gives
\begin{eqnarray}\label{finalbil}
-\frac12\int_0^T\phi(t)^{-1}\int_{\mathcal{M}_h} \Delta_h^2\psi|u|^2&\geq&
\frac{(1-C_I)^2}2\frac1{R^2}\int_{\mathcal{M}_h\cap S_R}\|u\|_{L^2_{\phi,T}}^2
\\
\nonumber
&-&C_{\Delta^2}^I\|u\|_{XL^2_{\phi,T}}^2-C_{\Delta^2}^{II}
\|\grad u\|^2_{YL^2_{\phi,T}}.
\end{eqnarray}
where the constants are explicitly given by
\begin{equation}\label{C_Delta}
C_{\Delta^2}^I=\left(\frac{\nu}N\right)^{3/2}\frac{36 C_I+162C_h(N+C_h)}\sigma,\quad
C_{\Delta^2}^{II}=\left(\frac{\nu}N\right)^{3/2}\frac{162C_h(N+C_h)}\sigma
\end{equation}
We now turn to the last term of \eqref{virialid} that is
\begin{equation}\label{Riccterm}
\int_{\mathcal{M}_h} \grad_h \psi \cdot \grad_h \mathcal R_h |u|^2.
\end{equation}
Notice that it involves only terms with derivatives on $h$ (and indeed vanishes in the flat case). Therefore, using repeatedly assumptions \eqref{assdera}, it is not difficult to show that
\begin{eqnarray}\label{crucialcurv}
\nonumber
|\grad\mathcal R_h(x)|&\leq&\frac{3C_h(1+18N^2)}{\langle x\rangle^{3+3\sigma}}
+\frac{3NC_h^2+9NC_h^2+9N^3C_h^2}{\langle x\rangle^{2+2\sigma}|x|}+\frac{3N^2C_h^3}{\langle x\rangle^{1+\sigma}|x|^2}
\\
&\leq&
\frac{C_{\mathcal{R}}}{\langle x\rangle^{1+\sigma}|x|^2}.
\end{eqnarray}
where the constant $C_{\mathcal{R}}$ is the sum of the three numerators above, that is
$$
C_{\mathcal{R}}=3C_h(1+18N^2)+3NC_h^2(4+3N^2)+3C_h^3N^2.
$$
We now multiply as usual \eqref{Riccterm} times $\phi^{-1}$ and integrate in time between $0$ and $T$: following calculations and relying on \eqref{prev3}-\eqref{hardy} yield the estimate
\begin{eqnarray}\label{ricfinal}
\nonumber
\int_0^T\phi(t)^{-1}\int_{\mathcal{M}_h} \grad_h \psi \cdot \grad_h \mathcal R_h |u|^2&\geq&
-C_{\mathcal{R}}\int_0^T\left[\int_{\mathcal{M}_h\cap B_R}+\int_{\mathcal{M}_h\cap B_R^c}\right]\frac{|u|^2}{\phi(t)|x|^2\langle x\rangle^{1+\sigma}}
\\
\nonumber
&\geq&-C_{\mathcal{R}}\left[\frac{2}\sigma\left(\frac{\nu}N\right)^{3/2}\|u\|_{XL^2_{\phi,T}}^2+2\left(\frac{\nu}N\right)^{3/2}\|\grad u\|_{YL^2_{\phi,T}}\right]
\\
&=&
-4C_{\mathcal{R}}^{I}\|u\|_{XL^2_{\phi,T}}^2-C_{\mathcal{R}}^{II}\|\grad u\|_{YL^2_{\phi,T}}
\end{eqnarray}
with
\begin{equation}\label{C_R}
C_{\mathcal{R}}^I=\frac{C_{\mathcal{R}}}{2\sigma}\left(\frac{\nu}N\right)^{3/2},\qquad C_{\mathcal{R}}^{II}=\frac{C_{\mathcal{R}}}2\left(\frac{\nu}N\right)^{3/2}.
\end{equation}

\subsection{Conclusion of the proof.}\label{conclusion}
We multiply times $\phi^{-1}$ and integrate in time identity \eqref{virialid} from $0$ to $T$, exchange integrals and use \eqref{termgrad}, \eqref{finalbil}, \eqref{ricfinal} for the left hand side and \eqref{rhsfinal} for the right hand side to obtain
\begin{equation}\label{conc1}
\frac{(1-C_I)^2}2\frac1{R^2}\int_{\mathcal{M}_h\cap S_R}\|u\|_{L^2_{\phi,T}}^2+\frac{2\nu^2}{3R}\int_{\mathcal{M}_h\cap B_R}\|\grad v\|_{L^2_{\phi,T}}^2
\end{equation}
\begin{equation*}
-(C_{\Delta^2}^I+C_{\mathcal{R}}^{I})\|u\|_{XL^2_{\phi,T}}^2-(C_{D^2}+C_{\Delta^2}^{II}+C_{\mathcal{R}}^{II})\|\grad u\|_{YL^2_{\phi,T}}
\end{equation*}
\begin{equation*}
\leq C_{\nu,N,\sigma}\|Hu(T)\|_{L^2(\mathcal{M}_h)}^2+\|Hu(0)\|_{L^2(\mathcal{M}_h)}^2
\end{equation*}
where the constants are explicit and given by \eqref{C}, \eqref{C_Delta} and \eqref{C_R}. We also stress that the constant $C=C_{\nu,N,\sigma}$ does not depend on $R$. We now take the $\sup$ over $R>1$ on the left hand side of \eqref{conc1} (notice that only the first two terms of inequality above depend on $R$). We use \eqref{prev4} to estimate, for $0<\theta<1$, 
\begin{equation}\label{concapp}
\frac{(1-C_I)^2}2\sup_{R>1}\frac1{R^2}\int_{\mathcal{M}_h\cap S_R}\|u\|_{L^2_{\phi,T}}^2
\geq (1-\theta)\frac{(1-C_I)^2}2\sup_{R>1}\frac1{R^2}\int_{\mathcal{M}_h\cap S_R}\|u\|_{L^2_{\phi,T}}^2
\end{equation}
\begin{equation*}
+
\theta(1-C_I)^2\left[\frac14\left(\frac\nu{N}\right)^{3/2}\|u\|_{XL^2_{\phi,T}}^2-\frac{13}4\|\grad u\|_{YL^2_{\phi,T}}\right].
\end{equation*}
Thanks to our assumption \eqref{assasy}, we can take $\nu=1-C_I$, such that
\begin{equation*}
\sup_{R>1}\frac{2\nu^2}{3R}\int_{\mathcal{M}_h\cap B_R}\|\grad u\|_{L^2_{\phi,T}}^2
\geq
\frac23(1-C_I)^2\|\grad u\|_{YL^2_{\phi,T}}^2.
\end{equation*}
Choosing $\theta$ in \eqref{concapp} such that $\frac{13\theta}4\leq \frac23$ (e.g. $\theta=1/5$) and using the simple property
$$
\sup_{R}(F_1(R)+F_2(R))\geq \frac12\left(\sup_RF_1(R)+\sup_RF_2(R)\right)
$$
for positive $F_1$ and $F_2$ yields
\begin{equation*}
\frac{(1-C_I)^2}2\sup_{R>1}\frac1{R^2}\int_{\mathcal{M}_h\cap S_R}\|u\|_{L^2_{\phi,T}}^2+\sup_{R>1}\frac{2\nu^2}{3R}\int_{\mathcal{M}_h\cap B_R}\|\grad u\|_{L^2_{\phi,T}}^2
\end{equation*}
\begin{equation*}
\geq
(1-C_I)^2\left(\frac1{40}\left(\frac\nu{N}\right)^{3/2}\|u\|_{XL^2_{\phi,T}}^2+\frac1{120}\|\grad u\|_{YL^2_{\phi,T}}^2\right)
\end{equation*}
which plugged into \eqref{conc1} finally gives
\begin{equation*}
M_1\|u\|_{XL^2_{\phi,T}}^2+M_2\|\grad u\|_{YL^2_{\phi,T}}^2\leq C_{\nu,N,\sigma}\|Hu(T)\|_{L^2(\mathcal{M}_h)}^2+\|Hu(0)\|_{L^2(\mathcal{M}_h)}^2
\end{equation*}
with
$$
M_1=\frac{(1-C_I)^2}{40}-C_{\Delta^2}^I-C_{\mathcal{R}}^{I}
$$
and
$$
M_2=\frac{(1-C_I)^2}{120}-C_{D^2}-C_{\Delta^2}^{II}-C_{\mathcal{R}}^{II}.
$$
The proof is concluded provided the constants $M_1$ and $M_2$ are positive,  i.e. if the constants $C_I$  and $C_h$ are small enough, by letting $T$ to infinity and using the conservation of the $L^2$-norm of $Hu$, which is standard.

\section{The warped products case}\label{locsmot2}

We dedicate this section to prove Theorem \ref{locsmoothteo2}. First of all, we notice that if $h$ is in the form \eqref{defWP} the following result holds.
%
\begin{proposition}\label{prop-computWP} Let $h$ be a warped product. We have 
\begin{equation}\label{curvature}
\mathcal R_h = - 2\frac{d''}{d} + \frac12 \Big( \frac{d'}{d}\Big)^2 + \frac1{d} \mathcal R_\kappa
\end{equation}
and
\begin{equation}\label{covder}
\Lambda^{1,ij} = 0 \textrm{ if } i=1 \textrm{ or } j= 1 \textrm{ and } \Lambda^{1,ij} = -\frac12 \frac{d'}{d^2} \kappa^{ij} \textrm{ otherwise.}
\end{equation}
\end{proposition}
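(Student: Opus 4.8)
The plan is to read the metric \eqref{defWP} as a warped product $h = dr^{2} + d(r)\,\beta$ with $r = x^{1}$ and fibre indices $a,b\in\{2,3\}$, and then to feed the relevant Christoffel symbols into the scalar-curvature formula \eqref{scalcurv} and into the definition of $\Lambda^{k,ij}$. The inverse metric is again block-diagonal, $h^{11}=1$, $h^{1a}=h^{a1}=0$, $h^{ab}=d^{-1}\beta^{ab}$, and since $\beta$ (hence $\beta^{ab}$ and $\det\beta$) depends only on $(x^{2},x^{3})$ while $d$ depends only on $r$, most partial derivatives vanish for free. Splitting the Christoffel symbols $\Lambda^{k}_{lm}$ of $h$ into their six blocks, a short direct computation gives $\Lambda^{1}_{11}=\Lambda^{1}_{1a}=\Lambda^{a}_{11}=0$, $\Lambda^{1}_{ab}=-\tfrac12 d'\beta_{ab}$, $\Lambda^{a}_{1b}=\tfrac{d'}{2d}\delta^{a}_{b}$, and $\Lambda^{a}_{bc}=\widetilde\Lambda^{a}_{bc}$, the Christoffel symbols of $\beta$; for the traces one uses $\det h = d^{2}\det\beta$, so $\Lambda^{i}_{i1}=\tfrac12\partial_{r}\log\det h = d'/d$ and $\Lambda^{i}_{ia}=\widetilde\Lambda^{b}_{ba}$.

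For \eqref{covder} I would avoid the Christoffel symbols altogether and instead use the alternative expression for $\Lambda^{k,ij}$ recorded just before \eqref{2diffform}, namely
\begin{equation*}
\Lambda^{1,ij} = \tfrac12\bigl(h^{1l}\partial_{l}h^{ij} - h^{il}\partial_{l}h^{1j} - h^{jl}\partial_{l}h^{1i}\bigr).
\end{equation*}
Since $h^{1i}$ and $h^{1j}$ are identically $0$ or $1$, the last two terms vanish, and $h^{1l}\partial_{l}h^{ij}=\partial_{r}h^{ij}$. If $i=1$ or $j=1$ then $h^{ij}\in\{0,1\}$ is constant in $r$, whence $\Lambda^{1,ij}=0$; if $i=a$, $j=b$ are fibre indices then $\partial_{r}h^{ab}=\partial_{r}(d^{-1}\beta^{ab})=-d^{-2}d'\,\beta^{ab}$, giving $\Lambda^{1,ab}=-\tfrac12\tfrac{d'}{d^{2}}\beta^{ab}$, which is exactly \eqref{covder}.

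For \eqref{curvature} there are two routes. The bare-hands one substitutes the Christoffel symbols above into \eqref{scalcurv}: the fibre block reproduces $h^{ab}$ times the intrinsic curvature of $\beta$, which carries the scaling $\mathcal{R}_{d\beta}=d^{-1}\mathcal{R}_{\beta}$, while the mixed and radial terms (and in particular the products $\Gamma^{\ell}_{jk}\Gamma^{i}_{i\ell}$ and $\Gamma^{\ell}_{ji}\Gamma^{i}_{k\ell}$, which involve $\Lambda^{1}_{ab}$, $\Lambda^{a}_{1b}$ and the traces above) combine into $-2d''/d$ and $\tfrac12(d'/d)^{2}$. A quicker route is the classical warped-product formula: writing $d=f^{2}$ so that $h=dr^{2}+f(r)^{2}\beta$ is a warped product over $(\R,dr^{2})$ with a two-dimensional fibre, one has $\mathcal{R}_{h}=\mathcal{R}_{dr^{2}}+f^{-2}\mathcal{R}_{\beta}-4\,f''/f-2\,(f')^{2}/f^{2}$; substituting $f=d^{1/2}$, $f'/f=d'/(2d)$, $f''/f=d''/(2d)-(d')^{2}/(4d^{2})$ and $\mathcal{R}_{dr^{2}}=0$ yields precisely \eqref{curvature}.

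I expect the only real obstacle to be bookkeeping: in the direct evaluation of \eqref{scalcurv} the quadratic terms generate several contributions in $d'$, $(d')^{2}$ and $d''$ that must be collected with care, and if one prefers the warped-product shortcut one must first pin down that the sign/normalisation conventions in \eqref{scalcurv} agree with that formula. In either case the computation can be sanity-checked on the flat model $d(r)=r^{2}$, $\mathcal{R}_{\beta}=2$ (the round $S^{2}$), for which \eqref{curvature} gives $\mathcal{R}_{h}=-4/r^{2}+2/r^{2}+2/r^{2}=0$, as it must.
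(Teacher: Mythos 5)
Your computation is correct, and since the paper's own proof is just the remark that this is a ``straightforward computation,'' you are doing exactly what the authors intend: your Christoffel symbols, the evaluation of $\Lambda^{1,ij}$ via the expression preceding \eqref{2diffform}, and the curvature formula all check out, and \eqref{curvature} is consistent with the specializations $d=e^{r/2}$, $d=r^2$, $d=r^n$ used later in Section \ref{locsmot2}. Your caution about conventions is well placed (the displayed formula \eqref{scalcurv} appears to carry a sign typo in its last term), but the flat-model sanity check resolves it.
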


\begin{proof} The proof is straightforward computation. \end{proof}

The strategy to prove Theorem \ref{locsmoothteo2} is the same we have seen in details in the previous section to deal with the asymptotically flat case, and thus consists in applying the virial identity \eqref{virialid} to an appropriate function $\psi$, and then estimate the various terms. We will deal with the three different cases separately.

Before getting into details, let us comment on the choice of the multiplier and on some of its basic properties. The multiplier we choose is very similar to the flat case or, indeed, the asymptotically flat case one. First, we take $\psi$ to be radial, which means here that it depends only on the priviledged variable $x^1 = r$. We divide $\psi$ into two sectors: one below a chosen $R$ ($r\leq R$) and one above $R$.  Below $R$, we choose the map $\psi'$ to be affine. This is important because of the integral
$$
\int_{\mathcal{M}_h} D^2(\psi)^{ij} \partial_i \overline u \partial_j u;
$$
as $\psi$ is radial, this term is equal to
$$
\int_{\mathcal{M}_h} h^{11}\psi'' |D_r u|^2-\int_{\mathcal{M}_h}\psi' \Lambda^{1,ij}  \overline{D_i u} D_j u.
$$
Taking $\psi'$ affine (and chosing it not constant), the first part controls the $L^2$ norm of $\partial_r u$. In the subflat and flat cases, we chose $\psi'$ linear because $\Lambda^{1,ij}$ is proportional to $-\frac1{r} h^{ij}$ thus we need $r$ to compensate this loss. In the hyperbolic case, we have $\Lambda^{1,ij}$ proportional to $-h^{ij}$ thus we need a constant term. 

Above $R$, we take $\psi'(r) = A -Bd(r)^{-1}$, choosing $A$ and $B$ such that $\psi$ is $\mathcal C^2$. This has many advantages : since $d$ is increasing, $\psi'$ is increasing and positive; taking the Laplace-Beltrami of $\psi$ yields $\lap_h \psi = A \lap_h r = A\frac{d'}{d}$; this choice makes $\psi''$ differentiable but not $\mathcal C^1$, which induces a Dirac delta in $\lap_h^2 \psi$.

\subsection{Hyperbolic-type metrics}
We start with the choice $d(r)=e^{r/2}$ that, as one may re-scale, includes some hyperbolic manifolds. In this case we have
$$
\mathcal R_h = -\frac38 + e^{-r/2} \mathcal R_\kappa \, ,\, \partial_1 \mathcal R_h = -\frac12e^{-r/2} \mathcal R_\kappa \, ,\, \Lambda^{1,ij} = -\frac14 h^{ij}.
$$

We recall that under the hypothesis of Theorem \ref{locsmoothteo2} for the hyperbolic type metrics, the curvature of $\kappa$ is positive, we recall our notation : $\mathcal R_\kappa > 0$.

We make the following choice for the radial multiplier $\psi_R$:
\begin{equation}\label{defpsiWP}
\psi'_R(r) = \left \lbrace{ \begin{array}{ll}
\displaystyle 1+M re^{- R/2} & \textrm{ if } r\leq R \\
\displaystyle 1+M R e^{-R/2 }(2+R)-2M e^{-r/2} & \textrm{ if } r> R.
\end{array}} \right.
\end{equation}
 for some $M \leq \mathcal \inf \mathcal{R}_\kappa$ where, we recall, $\psi_R' = \partial_1\psi_R=\partial_r\psi_R$.
With this choice, we have that $\psi$ is $\mathcal C^2$ and the following identities hold
\begin{eqnarray*}
\psi_R'' & =&   M e^{-R/2}{\bf 1}_{r\leq R} +M e^{-r/2} {\bf 1}_{r>R}\\
\lap_h \psi_R &= &\Big( \frac12 + M e^{-R/2}(1+\frac{r}{2})\Big) {\bf 1}_{r\leq R} + {\bf 1}_{r>R} \Big(\frac12 +M  e^{-R/2}(2+R)\Big)\\
\lap^2 \psi_R &= &- \frac{M}{2}e^{-R/2} \delta(r-R) + {\bf 1}_{r\leq R} \frac{M}{4}e^{-R/2}
\end{eqnarray*}
where $\delta$ is the Dirac delta. 

We start by computing the terms involving $|u|^2$ in the virial identity.

\begin{lemma}\label{lem-hypcompu2} We have
$$
-\frac12 \int_{\mathcal{M}_h} \Big( \lap_h^2 \psi + \frac12\grad_h \psi \cdot \grad_h \mathcal R_h\Big) |u|^2 \geq \int_{S_R} d\kappa |u|^2
$$
where $S_R$ is the set $r=R$. \end{lemma}

\begin{proof} We use the fact that $\mathcal R_\kappa $ is positive, that $\psi' \geq 0$ and that below $R$, $\psi'(r) \geq 1$, to get  
$$
-{\bf 1}_{r\leq R} \frac{M}{4}e^{-R/2} - \frac12 \mathcal \grad_h \psi \cdot \grad_h \mathcal R_h \geq {\bf 1}_{r\leq R} \Big( e^{-r/2} \frac14 \mathcal R_\kappa - e^{-R/2} \frac{M}{4}\Big).
$$
Since $M \leq \inf \mathcal R_\kappa$, we get
$$
-{\bf 1}_{r\leq R} \frac{M}{4}e^{-R/2} - \frac12 \mathcal \grad_h \psi \cdot \grad_h \mathcal R_h \geq 0.
$$

Therefore, we have 
$$
-\frac12 \int_{\mathcal{M}_h} \Big( \lap_h^2 \psi + \frac12\grad_h \psi \cdot \grad_h \mathcal R_h\Big) |u|^2 \geq \int_{\mathcal{M}_h} \frac{M}{2}e^{-R/2} \delta(r-R) |u|^2.
$$
What is more,
$$
\int_{\mathcal{M}_h} \delta(r-R) |u|^2 =  e^{R/2} \int_{S_R} |u|^2d\kappa
$$
which yields the result.
\end{proof}

We now deal with the terms involving the gradient of $u$. 

\begin{lemma}\label{lem-hypcompgrad} Assuming $M \leq \frac14$, we have
$$
\int_{\mathcal{M}_h} D^2(\psi)^{ij} \overline{D_i u} D_j u \geq M e^{-R/2} \int_{B_R} |\grad_h u|^2.
$$
\end{lemma}

\begin{proof} If $i=j=0$, we have $D^2(\psi)^{ij} = \psi''(r)$. And if none of them is $0$, we have $D^2(\psi)^{ij} = \frac14 h^{ij}\psi'$. We use that above $R$, $\psi'$ and $\psi''$ are non negative to get
$$
\int_{r> R} D^2(\psi)^{ij} \partial_i \overline u \partial_j u \geq 0.
$$
Below $R$, we use that $\psi'' \geq M e^{-R/2} $ and $\psi' \geq 1 \geq M e^{-R/2} $, to get the result.
\end{proof}

\begin{lemma}\label{lem-hypnormu2} We have, for any $\eta > 0$
$$
\sup_R \int_{S_R} \int dt \phi^{-1}(t)|u^2| d\kappa \geq \int_{\mathcal{M}_h}\an{r}^{-(1+\eta)} e^{-r/2} \int dt \phi^{-1}(t)|u|^2 .
$$
\end{lemma}

\begin{proof}
Indeed,
$$
\int_{\mathcal{M}_h}\an{r}^{-(1+\eta)} e^{-r/2} \int dt \phi^{-1}(t)|u|^2 = \int_{0}^\infty \an{r}^{-(1+\eta)} \int_{S_r} \int dt \phi^{-1}(t) |u|^2d\kappa.
$$
\end{proof}

\begin{lemma}\label{lem-hypnormgrad} For any $\eta_2 > 0$, there exists $C_{\eta_2}$ such that
$$
C_{\eta_2} \sup_R e^{-R/2} \int_{B_R} \int dt \phi^{-1}(t) |\grad_h u|^2 \geq \int_{\mathcal{M}_h} \an{r}^{-(1+\eta_2)} e^{-r/2} \int dt \phi^{-1}(t) |\grad_h u|^2.
$$
\end{lemma}

\begin{proof}
Indeed, let $\chi = \an{r}^{-(1+\eta_2)} e^{-r/2}$, we have
$$
\int_{\mathcal{M}_h} \chi(r)  \int dt \phi^{-1}(t)|\grad_h u|^2 = -\int_{\mathcal{M}_h} \int_{r}^\infty \chi'(y) dy  \int dt \phi^{-1}(t)|\grad_h u|^2(r).
$$
Interverting the integrals we get
\begin{multline*}
\int_{\mathcal{M}_h} \chi(r)  \int dt \phi^{-1}(t)|\grad_h u|^2 =- \int_{0}^\infty dy \chi'(y) \int_{B_y} \int dt \phi^{-1}(t)|\grad_h u|^2
\\
 \leq -\int_{0}^\infty \chi'(y) e^{y/2} dy \sup_R e^{-R/2}\int_{B_R} \int dt \phi^{-1}(t) |\grad_h u|^2.
\end{multline*}
We have $\chi'(y)e^{y/2} = -(1+\eta_2) \frac{r}{\an{r}^{3+\eta}} - \frac12 \an{r}^{-(1+\eta_2)}$. Hence it is integrable, and we get the result.
\end{proof}

\begin{lemma}\label{lem-RHS}Under the hypothesis of Theorem \ref{locsmoothteo2}, there exists $C$ such that for every $u$ solution of the linear Dirac equation, we have the following estimate 
$$
\Big| \int_{\mathcal{M}_h} \lap \psi_R \overline u \phi \partial_t u + \int_{\mathcal{M}_h} \grad_h \psi_R \cdot \grad_h \overline u \phi \partial_t u \Big| \leq C \|Hu(t)\|_{L^2(\mathcal{M}_h)}.
$$
\end{lemma}

\begin{proof} We use that $u$ is a solution to the Dirac equation, that is $\phi \partial_t u = Hu$,
$$\Big| \int_{\mathcal{M}_h} \lap \psi_R \overline u \phi \partial_t u + \int_{\mathcal{M}_h} \grad_h \psi_R \cdot \grad_h \overline u \phi \partial_t u \Big| = \Big| \int_{\mathcal{M}_h} \lap_h \psi_R \overline u Hu + \int_{\mathcal{M}_h} \grad_h \psi_R \cdot \grad_h \overline u H u \Big| .
$$
Then, we use that $\lap_h \psi_R$ and $\grad_h \psi_R $ both belong to $L^\infty$ and that their $L^\infty$ norms are uniformly bounded in $R$, to obtain
$$
\Big| \int_{\mathcal{M}_h} \lap \psi_R \overline u \phi \partial_t u + \int_{\mathcal{M}_h} \grad_h \psi_R \cdot \grad_h \overline u \phi \partial_t u \Big| \lesssim \int_{\mathcal{M}_h}  |\overline u| \; | Hu| +\int_{\mathcal{M}_h} | \grad_h \overline u|\; | H u|  .
$$
Thanks to H\"older's inequality we get
$$
\Big| \int_{\mathcal{M}_h} \lap \psi_R \overline u \phi \partial_t u + \int_{\mathcal{M}_h} \grad_h \psi_R \cdot \grad_h \overline u \phi \partial_t u \Big| \lesssim \| u\|_{L^2} \| Hu\|_{L^2} +\| \grad_h u\|_{L^2} \| H u\|_{L^2}  .
$$
We use that under the hypothesis of Theorem \ref{locsmoothteo2}, $\mathcal R_h + 4m^2 $ is more than a non-negative constant, which explains the hypothesis $m^2 > \frac3{32}$, to get that 
$$
\|u\|_{L^2} + \|\grad_h u\|_{L^2} \lesssim \|H u\|_{L^2}
$$
which yields the result. \end{proof}

\begin{proof}[Proof of estimate \eqref{locsmoothhyp} in Theorem \ref{locsmoothteo2}. ] Let us recall the virial identity
\begin{multline*}
-\int_{\mathcal{M}_h}\Big( \frac12 \lap_h^2 \psi_R + \frac14 \grad_h \psi_R \cdot \grad_h \mathcal R_h\Big) |u|^2 + 2 \int_{\mathcal{M}_h} (\partial_j \overline u \partial_i u) D^2(\psi_R)^{ij}\\
=-\phi \partial_t \re \Big( \int_{\mathcal{M}_h} \lap_h \psi_R \overline u \partial_t u + 2 \int_{\mathcal{M}_h} \grad_h \psi_R \cdot \grad_h \overline u \phi \partial_t u \Big).
\end{multline*}

We divide by $\phi$ and integrate over time to get
\begin{multline*}
\int dt \phi^{-1}(t) \Big(-\int_{\mathcal{M}_h}\Big( \frac12 \lap_h^2 \psi_R + \frac14 \grad_h \psi_R \cdot \grad_h \mathcal R_h\Big) |u|^2 + 2 \int_{\mathcal{M}_h} (\partial_j \overline u \partial_i u) D^2(\psi_R)^{ij}\Big)\\
\leq 2 \sup_t \Big| \re \Big( \int_{\mathcal{M}_h} \lap_h \psi_R \overline u \partial_t u + 2 \int_{\mathcal{M}_h} \grad_h \psi_R \cdot \grad_h \overline u \phi \partial_t u \Big)\Big|.
\end{multline*}

We use Lemma \ref{lem-RHS} to get
\begin{multline*}
\int dt \phi^{-1}(t) \Big(-\int_{\mathcal{M}_h}\Big( \frac12 \lap_h^2 \psi_R + \frac14 \grad_h \psi_R \cdot \grad_h \mathcal R_h\Big) |u|^2 + 2 \int_{\mathcal{M}_h} (\partial_j \overline u \partial_i u) D^2(\psi_R)^{ij}\Big)\\
\leq C \sup_t \|Hu(t)\|_{L^2}^2.
\end{multline*}

And finally, we use the conservation of energy to get
\begin{multline*}
\int dt \phi^{-1}(t) \Big(-\int_{\mathcal{M}_h}\Big( \frac12 \lap_h^2 \psi_R + \frac14 \grad_h \psi_R \cdot \grad_h \mathcal R_h\Big) |u|^2 + 2 \int_{\mathcal{M}_h} (\partial_j \overline u \partial_i u) D^2(\psi_R)^{ij}\Big)\\
\leq C \sup_t \|Hu_0\|_{L^2}^2.
\end{multline*}

We use Lemmas \ref{lem-hypcompu2} and \ref{lem-hypcompgrad} :
$$
\int dt \phi^{-1}(t) \Big(\int_{S_R} d\kappa |u|^2 + M e^{-R/2} \int_{B_R} |\grad_h u|^2 \Big)
\leq C \sup_t \|Hu_0\|_{L^2}^2,
$$
that is,
$$
 \Big(\int_{S_R} d\kappa \int dt \phi^{-1}(t) |u|^2 + M e^{-R/2} \int_{B_R}\int dt \phi^{-1}(t) |\grad_h u|^2 \Big)
\leq C \sup_t \|Hu_0\|_{L^2}^2.
$$

Finally, we pass to the sup and use Lemmas \ref{lem-hypnormu2} and \ref{lem-hypnormgrad} to get
\begin{multline*}
 \Big(\int_{\mathcal{M}_h} \an{r}^{-1-\eta_1}e^{-r/2} \int dt \phi^{-1}(t) |u|^2 +   \int_{\mathcal{M}_h}\an{r}^{-1-\eta_2}e^{-r/2}\int dt \phi^{-1}(t) |\grad_h u|^2 \Big)\\
\leq C \sup_t \|Hu_0\|_{L^2}^2,
\end{multline*}
which gives the result.
\end{proof}

\subsection{Flat-type metrics}

There is an equivalence between the Dirac equation on $\R^{1+3}$ and on the warped product $\R\times \R_+ \times S^2$ provided one choses a natural dreibein. As we mentionned, dreibein connect the structure of the tangent spaces, it is therefore natural to consider
$$
f_j^a = \partial_j y^a
$$
where we use $y^a$ as the coordinates in the Euclidean space. In this case, the equality $f_j^a\eta_{ab} f_i^b = h^{ij}$ is equivalent to $dy^a \eta_{ab} dy^b = dx^i h_{ij} dx^j$. The spin connection provided with this dreibein is equal to $0$. An easy way to see this is to use a different (but equivalent) definition for the spin connection : 
$$
\alpha_i^{ab} = \frac12 f^{ja}( \partial_i f_j^b-  \partial_j f_i^b) - \frac12 f^{jb}(\partial_i f_j^a - \partial_j f_i^a) -\frac12 f^{ka}f^{jb} (\partial_k f_{jc } - \partial_j f_{kc}) f_i^c.
$$
Because the change of variable is smooth, we have
$$
\partial_i f_j^b = \partial_i \partial_j y^b = \partial_j f_i^b.
$$
We also have $f_{jc} = \eta_{cd}f_j^d$ therefore, 
$$
(\partial_k f_{jc } - \partial_j f_{kc}) = \eta_{cd} (\partial_k f_{j}^d - \partial_j f_{k}^d) = 0
$$
which yields $ \alpha_i^{ab} =0$. This gives $D_i = \partial_i$. Finally, the Dirac equation in this space and with this dreibein writes
$$
i\gamma^a e^\mu_a D_\mu u = mu.
$$
The mass term and the term involving the derivative in time do not change, hence we may focus on
$$
\gamma^a f^j_a D_j = \gamma^a f^j_a \partial_j 
$$
and since $\partial_j = \partial_j y^b \partial_b = f_j^b \partial_b$ we get
$$
\gamma^a f^j_a D_j = \gamma^a f^j_af_j^b \partial_b = \gamma^a \delta_a^b \partial_b = \gamma^a \partial_a
$$
and we retrieve the Dirac equation in the flat case.

We now take $d(r) = r^2$ which includes the flat case. With this choice we have 
$$
\mathcal R_h  = -2 r^{-2} + r^{-2} \mathcal R_\kappa , \partial_r \mathcal R_h = - 2(\mathcal R_\kappa - 2) \frac1{r^3}.
$$
If $\mathcal R_\kappa \geq 2$, then the computations are exactly the same as in the flat case. Let us be more precise. 

The multiplier should be essentially the same as in the flat case, that is :  
$$
\psi'_R(r) = \frac{r}{\an R} {\bf 1}_{r\leq R} + \frac{R}{\an R} \Big(  \frac32 - \frac{R^2}{r^2}\Big) {\bf 1}_{r>R}.
$$
We multiply the usual multiplier by $\frac{R}{\an R}$ not to mess with Hardy's inequality in the energy term.

We recall that $\psi'_R$ is non-negative, increasing and bounded by $\frac32$.

What is more,
$$
\lap_h \psi_R = \frac{3}{\an R} {\bf 1}_{r\leq R} + \frac{R}{\an R} \frac3{r} {\bf 1}_{r>R}
$$
which is positive and bounded by $3$ and 
$$
\lap_h^2 \psi_R = -\frac3{R \an R} \delta (r-R).
$$

We start with the terms involving $|u|^2$. 

\begin{lemma} For all $R$, we have
$$
 -\frac12 \int_{\mathcal{M}_h} (\lap_h^2 \psi_R + \frac12 \grad_h \psi_R\cdot \grad_h \mathcal R_h ) |u|^2 \geq  3 \frac{R}{\an R} \int_{S_R}|u|^2 d\kappa.
 $$
And for all $\eta > 0$, there exists $C_\eta$ such that
$$
\|\an{r}^{-3/2-\eta}u\|_{L^2(\mathcal{M}_g)}^2 \leq C_\eta \sup_R \frac{R}{\an R} \int_{S_R}\int dt \phi^{-1}(t) |u|^2d\kappa.
$$
\end{lemma}

\begin{proof} We have that
$$
-\int_{\mathcal{M}_h} \lap_h^2 \psi_R |u|^2 =  3 \frac{R}{\an R} \int_{S_R} |u|^2 d\kappa.
$$
What is more $\psi'_R$ is non-negative and since $\mathcal R_\kappa \geq 2$, $\partial_r \mathcal  R_h$ is non positive. This means 
$$
-\frac14 \int_{\mathcal{M}_h}  \grad_h \psi_R\cdot \grad_h \mathcal R_h  |u|^2\geq 0.
$$
Note that since $\psi'_R$ goes to $\frac32  \frac{R}{\an R} $ when $r$ goes to $\infty$, $ \grad_h \psi_R\cdot \grad_h \mathcal R_h $ behave like $\frac1{r^3}$ when $r$ goes to $\infty$, this is not sufficient to be compensated by the bi-Laplace-Beltrami term if $\mathcal R_\kappa \leq 2$.

We consider the quantity
\begin{multline*}
\|\an{r}^{-3/2-\eta}u\|_{L^2(\mathcal{M}_g)}^2 = \int_{\mathcal{M}_h} \an{r}^{-3-2\eta}\int dt \phi^{-1}(t) |u|^2 \\
= \int_{0}^\infty \an{r}^{-2-2\eta} r\Big( \frac{r}{\an r} \int_{S_r} \int dt \phi^{-1}(t)|u|^2 d\kappa\Big).
\end{multline*}
Since $\an{r}^{-2-2\eta} r$ is integrable, this yields,
$$
\|\an{r}^{-3/2-\eta}u\|_{L^2(\mathcal{M}_g)}^2 \leq C_\eta \sup_R \frac{R}{\an R} \int_{S_R} \int dt \phi^{-1}(t)|u|^2 d\kappa
$$
which concludes the proof.
\end{proof}

We now focus on the term involving the derivatives of $u$.

\begin{lemma} For all $R$, we have 
$$
\int_{\mathcal{M}_h} D^2(\psi_R)^{ij} \overline{D_i u} D_ju  \geq \frac1{\an R} \int_{B_R} |\grad_h u|^2.
$$
And for any $\eta_2 > 0$, there exists $C_{\eta_2}$ such that
$$
\|\an{r}^{-1/2-\eta_2} \grad_h u\|_{L^2(\mathcal{M}_g)} \leq C_{\eta_2} \sup_R \frac1{\an R} \int_{B_R} \int dt \phi^{-1}(t) |\grad_h u|^2.
$$
\end{lemma}

\begin{proof} We have 
$
D^2(\psi_R)^{ij} = h^{il}h^{kj}\partial_l \partial_k \psi_R - \Lambda^{k,ij} \partial_k \psi_R,
$
and because $\psi_R$ is radial, this yields
$$
D^2(\psi_R)^{ij} = \delta^i_1 \delta^j_1 \psi_R'' - \Lambda^{1,ij} \psi_R'.
$$
we have $\Lambda^{1,ij} =0$ if $i=1$ or $j=1$ and $\Lambda^{1,ij} = - \frac12 \frac{d'}{d} h^{ij}$ otherwise. Therefore, with $d(r) = r^2$
$$
D^2(\psi_R)^{ij} = \delta^i_1 \delta^j_1 \psi_R'' + (1-\delta^i_1)(1- \delta^j_1)\frac{h^{ij}}{r} \psi_R'.
$$
Above $R$, we use that $\psi'_R$ and $\psi''_R$ are non negative and that $\kappa^{ij}$ is positive to get
$$
\int_{r>R} D^2(\psi_R)^{ij}  \overline{D_i u} D_j u \geq 0.
$$

Below $R$, we have $\psi''_R = \frac{\psi'_R}{r} = \frac1{\an R}$, therefore $D^2(\psi_R)^{ij} =\frac1{\an R} h^{ij}$ and
$$
\int_{\mathcal{M}_h}D^2(\psi_R)^{ij} \partial_i \overline u \partial_j u \geq \frac1{\an R}\int_{B_R} |\grad_h u|^2 .
$$

Write $\chi(r) = \an{r}^{-1-2\eta_2}$. We use as in the hyperbolic case that
$$
\int_{\mathcal{M}_h} \chi(r) \int dt \phi^{-1}(t)|\grad_h u|^2  = - \int_{0}^\infty \chi'(y) \int_{B_y} \int dt \phi^{-1}(t)|\grad_h u|^2
$$
which yields
$$
\int_{\mathcal{M}_h} \chi(r) \int dt \phi^{-1}(t)|\grad_h u|^2  = - \int_{0}^\infty \chi'(y)\an y \frac1{\an y} \int_{B_y} \int dt \phi^{-1}(t) |\grad_h u|^2
$$
We have $-\chi'(y) \an y = (1+2\eta_2) y \an{y}^{-2-2\eta_2}$ which is integrable since $\eta_2 >0$ and thus
$$
\int \chi(r)  \int dt \phi^{-1}(t)|\grad_h u|^2  \lesssim \sup_y \frac1{\an y} \int_{B_y} \int dt \phi^{-1}(t) |\grad_h u|^2
$$
from which we deduce the result.
\end{proof}

This concludes the estimates for the LHS, we now deal with the RHS.

\begin{lemma}Under the hypothesis of Theorem \ref{locsmoothteo2}, there exists $C$ such that for every $u$ solution of the linear Dirac equation, we have the following estimate 
$$
\Big| \int_{\mathcal{M}_h} \lap \psi_R \overline u \phi \partial_t u + \int_{\mathcal{M}_h} \grad_h \psi_R \cdot \grad_h \overline u \phi \partial_t u \Big| \leq C \|Hu(t)\|_{L^2(\mathcal{M}_h)}.
$$
\end{lemma}

\begin{proof} We can essentially repeat the proof in the hyperbolic case. The only difference is that to have $\mathcal R_h + 4m$ bigger than a positive constant, we need $m >0$. \end{proof}

The second estimate of Theorem \ref{locsmoothteo2} is deduced in the same way as in the hyperbolic case.

\subsection{Sub-flat type metrics}

We consider another specific case, which is $d(r) = r^n$ with $2-\sqrt 2 < n \leq \frac43$. With this choice we have
\begin{equation}\label{Rint}
\mathcal R_h = \frac{4n-3n^2}{2r^2} + \frac1{r^n}\mathcal R_\kappa \mbox{ and } \partial_r \mathcal R_h = -\frac{4n-3n^2}{r^3} - \frac{n}{r^{n+1}} \mathcal R_\kappa.
\end{equation}
As $\partial_r \mathcal R_h $ is negative (as long as $\mathcal R_\kappa$ is non negative), we may have that it can compensate losses due to the bi-Laplacian. 

Let us take
$$
\psi'_R = \left \lbrace{ \begin{array}{cc}
\displaystyle\frac{r}{\an{R}} & \textrm{ if } r\leq R \\
\displaystyle\Big( \frac{n+1}{n} -\frac1{n} \Big( \frac{R}{r}\Big)^n\Big) \frac{R}{\an{R}} & \textrm{ if } r> R
\end{array}}\right.
$$
Notice that with this choice  $\psi_R\in \mathcal C^2$. Moreover, we note the following properties :
\begin{itemize}
\item
for $r\leq R$, we have $\psi''_R = \frac1{\an{R}}$, and $\lap_h \psi_R = \frac{n+1}{\an{R}}$,
\item
For $r> R$, we have $\psi''_R > 0$, hence $\frac{R}{\an{R}}\leq  \psi'_R  \leq \frac{(n+1) R}{\an{R}}$. 
\end{itemize}
Besides, 
$$\lap_h \psi_R = (n+1) \frac{R}{r\an{R}}{\bf 1}_{r>R}.$$

From these relations we can deduce, 
$$
\lap_h^2 \psi_R = - \frac{(n+1)}{R\an{R}} \delta (r-R) - {\bf 1}_{r> R} R(n+1)(n-2) r^{-3}\an{R}^{-1}.
$$

Note that $\psi_R'$ is non negative, increasing and bounded by $\frac{n+1}{n}$ and that $\lap_h \psi_R$ is bounded by $n+1$. 

We start with the terms in $|u|^2$ in the LHS.

\begin{lemma} Assuming that $n\geq 2-\sqrt 2$, there exists $C_n >0$, such that if $\min \mathcal R_\kappa \geq C_n$, then 
\begin{multline*}
\sup_y \frac{y^{n-1}}{\an y} \int_{S_y} \int dt \phi^{-1}(t)|u|^2 d\kappa \leq\\
 C_n \sup_R \Big(-\int_{\mathcal{M}_h}( \lap_h^2\psi_R + \frac12 \grad_h \mathcal R_h \cdot \grad_h \psi_R)  \int dt \phi^{-1}(t)|u|^2 \Big) .
\end{multline*}
\end{lemma}

\begin{proof} Write 
$$
-(\lap_h^2\psi_R + \frac12 \grad_h \mathcal R_h \cdot \grad_h \psi_R) = \frac{n+1}{R\an R} \delta(r-R) + f_R(r) +g_R(r)
$$
with
$$
f_R(r) = -{\bf 1}_{r\leq R} \frac12 \grad_h \mathcal R_h \cdot \grad_h \psi_R
$$
and 
$$
g_R(r) = \Big( \frac{R}{\an R}(n+1)(n-2) r^{-3} - \frac12 \grad_h \mathcal R_h \cdot \grad_h \psi_R\Big){\bf 1}_{r> R}.
$$

We have 
$$
\int_{\mathcal{M}_h} \frac{n+1}{R\an R} \delta(r-R) \int dt \phi^{-1}(t)|u|^2  = (n+1) \frac{R^{n-1}}{\an R} \int_{S_R}  \int dt \phi^{-1}(t)|u|^2 d\kappa.
$$

We have that $\psi'_R$ is non negative and that $\partial_1 \mathcal R_h$ is non positive, thus $f_R(r) \geq 0$, therefore
$$
\int_{\mathcal{M}_h} f_R(r)  \int dt \phi^{-1}(t)|u|^2 \geq 0.
$$

Since $\partial_1 \mathcal R_h$ is non negative, and $\psi'_R \geq \frac{R}{\an R} $ we have 
$$
g_R(r) \geq \frac{R}{\an R } \Big( (n+1)(n-2) r^{-3} - \frac12 \partial_1 \mathcal R_h\Big){\bf 1}_{r> R},
$$
and replacing $\partial_1 \mathcal R_h$ by its value,
$$
g_R(r) \geq  \frac{R}{\an R } \Big( -\frac{n^2-2n +4}{2} r^{-3} +\frac{n\mathcal R_\kappa}{r^{n+1}}\Big){\bf 1}_{r> R}.
$$
We factorise by $r^{-3}$ to get
$$
g_R(r) \geq  \frac{R}{r^3\an R } \Big( -\frac{n^2-2n +4}{2}  +n\mathcal R_\kappa r^{2-n+1}\Big){\bf 1}_{r> R}.
$$
Note that $n^2 - 2n + 4$ is always positive and that $2-n>0$. Hence for $r\geq R_0 = \Big( \frac{n^2-2n +4}{2n\min \mathcal R_\kappa}\Big)^{1/(2-n)}$, $g_R(r) \geq 0$, which implies
$$
\int_{\mathcal{M}_h} g_R(r) |u|^2 \geq \int_{R\leq r\leq R_0} g_R(r) |u|^2 dr \geq - \frac{R}{\an R }\frac{n^2-2n +4}{2} \int_{R\leq r\leq R_0} r^{-3}|u|^2.
$$
We have
$$
\int_{R\leq r\leq R_0} r^{-3} \int dt \phi^{-1}(t)|u|^2 = \int dr r^{n-3} \int_{S_r } \int dt \phi^{-1}(t)|u|^2d\kappa
$$
$$
\leq \sup_y \frac{y^{n-1}}{\an y} \int_{S_y}  \int dt \phi^{-1}(t)|u|^2 d\kappa \int_{R}^{R_0} \an r r^{-2}dr.
$$
Since
$$
\int_{R}^{R_0} \an r r^{-2}dr \leq \frac{\an{R_0}}{R},
$$
we have
$$
\int_{\mathcal{M}_h} g_R(r) \int dt \phi^{-1}(t) |u|^2 \geq - \frac{n^2-2n +4}{2} \an{R_0} \sup_y \frac{y^{n-1}}{\an y}\int_{S_y} \int dt \phi^{-1}(t)|u|^2.
$$
Combining this inequality with the ones involving $f_R$ and the Dirac delta, we get
$$
-\int_{\mathcal{M}_h}( \lap_h^2\psi_R + \frac12 \grad_h \mathcal R_h \cdot \grad_h \psi_R) \int dt \phi^{-1}(t) |u|^2
$$
$$
 \geq (n+1) \frac{R^{n-1}}{\an R} \int_{S_R} \int dt \phi^{-1}(t) |u|^2 d\kappa -  \frac{n^2-2n +4}{2} \an{R_0} \sup_y \frac{y^{n-1}}{\an y}\int_{S_y} \int dt \phi^{-1}(t) |u|^2.
$$
we take the supremum over $R$ to get
$$
\sup_R \Big(-\int_{\mathcal{M}_h}( \lap_h^2\psi_R + \frac12 \grad_h \mathcal R_h \cdot \grad_h \psi_R) \int dt \phi^{-1}(t) |u|^2\Big)
$$
$$
 \geq \Big( (n+1)  -  \frac{n^2-2n +4}{2} \an{R_0}\Big) \sup_y \frac{y^{n-1}}{\an y}\int_{S_y} \int dt \phi^{-1}(t)|u|^2.
$$
Note that $n+1- \frac{n^2-2n +4}{2} = \frac12 (2+\sqrt2 -n)(n-(2-\sqrt2)$ hence if $n> 2-\sqrt 2$, $\frac{2(n+1)}{n^2-2n +4} > 1$. Take $\inf \mathcal R_\kappa$ sufficiently big such that $R_0$ is sufficiently small to have 
$$
\an{R_0} < \frac{2(n+1)}{n^2-2n +4}.
$$
With these conditions, $(n+1)  -  \frac{n^2-2n +4}{2} \an{R_0} > 0$, and we get the result.
\end{proof}

\begin{lemma} Let $n \in ]2-\sqrt 2, \frac43 ]$ and $\eta >0$. There exists $C_{n,\eta}$ such that if for all $x^2,x^3$, $\mathcal R_\kappa \geq C_n$ then
$$
\|\an{r}^{-3/2-\eta}u\|_{L^2(\mathcal{M}_g)}^2 \leq C_n
\sup_R \Big(-\int_{\mathcal{M}_h}( \lap_h^2\psi_R + \frac12 \grad_h \mathcal R_h \cdot \grad_h \psi_R)  \int dt \phi^{-1}(t)|u|^2\Big).
$$
\end{lemma}

\begin{proof} We have
\begin{multline*}
\|\an{r}^{-3/2-\eta}u\|_{L^2(\mathcal{M}_g)}^2 = \int_{\mathcal{M}_h} \an{r}^{-3-2\eta} \int dt \phi^{-1}(t) |u|^2\\
 = \int_{0}^\infty dr r^n \an{r}^{-3-2\eta} \int_{S_r}d\kappa \int dt \phi^{-1}(t) |u|^2.
\end{multline*}

We get
$$
\|\an{r}^{-3/2-\eta}u\|_{L^2(\mathcal{M}_g)}^2 \leq \int_{0}^\infty dr \an r^{-2-2\eta} r \sup_{y} \frac{y^{n-1}}{\an y} \int_{S_y}  \int dt \phi^{-1}(t)|u|^2 d\kappa
$$
and we use the previous result to conclude.
\end{proof}

We deal with the terms in $\grad_h u$ in the left hand side of the virial identity.

\begin{lemma} We have for all $R$,
$$
2 \int_{\mathcal{M}_h} D^2(\psi_R)^{ij} \overline{D_i u} D_j u \geq \frac{n}{\an R}\int_{B_R} |u|^2 .
$$
For all $\eta_2 >0$ there exists $C_{\eta_2, n}$ such that
$$
\|\an{r}^{-1/2-\eta_2}\grad_h u\|_{L^2(\mathcal{M}_g)}^2 \leq C_{\eta_2,n} \sup_R \frac1{\an R} \int_{B_R} \int dt \phi^{-1}(t)|\grad_h u|^2.
$$
\end{lemma}

\begin{proof}
We have 
$$
D^2(\psi_R)^{11} = \psi''_R \mbox{ and if }ij\neq 1\;  D^2(\psi_R)^{ij} = \frac12 \frac{n}{r} h^{ij} (\psi'_R),
$$
from which we deduce that above $R$, since $\psi_R'$ is non negative and non decreasing, we have 
$$
D^2(\psi_R)^{ij} \partial_i \overline{D_i u} D_j u \geq 0
$$
and under $R$, since $\psi'_R = \frac{r}{\an R}$,
$$
D^2(\psi_R)^{ij}\overline{D_i u} D_j u \geq  \frac{n}{2\an{R} } |\grad_h u|^2 .
$$
Therefore,
$$
2 \int_{\mathcal{M}_h} D^2(\psi_R)  \partial_i \overline u \partial_j u  \geq \frac{n}{\an{R}} \int_{B_R} |\grad_h u|^2.
$$

Let $\chi(r) = \an r^{-1-2\eta_2}$.
We have
$$
\|\an{r}^{-1/2-\eta_2}\grad_h u\|_{L^2(\mathcal{M}_g)}^2 = \int_{\mathcal{M}_h} \chi(r) \int dt \phi^{-1}(t) |\grad_h u|^2.
$$
Given that $\chi(r) = -\int_{r}^\infty \chi'(y)dy$, we get
$$
\|\an{r}^{-1/2-\eta_2}\grad_h u\|_{L^2(\mathcal{M}_g)}^2 = -\int_{0}^\infty dy \chi'(y) \int_{B_y}  \int dt \phi^{-1}(t)|\grad_h u|^2.
$$
We deduce
$$
\|\an{r}^{-1/2-\eta_2}\grad_h u\|_{L^2(\mathcal{M}_g)}^2 \leq -\int_{0}^\infty \an y \chi'(y) \sup_R \frac1{\an R} \int_{B_R} \int dt \phi^{-1}(t) |\grad_h u|^2.
$$
We have $-\chi'(y) \an y=(1+2\eta_2) \frac{r}{\an r} \an{r}^{-1-2\eta_2}$, hence $-\chi'(y)\an y$ is integrable and we get the result.
\end{proof}

We now deal with the LHS.

\begin{lemma}Under the hypothesis of Theorem \ref{locsmoothteo2}, there exists $C$ such that for every $u$ solution of the linear Dirac equation, we have the following estimate 
$$
\Big| \int_{\mathcal{M}_h} \lap \psi_R \overline u \phi \partial_t u + \int_{\mathcal{M}_h} \grad_h \psi_R \cdot \grad_h \overline u \phi \partial_t u \Big| \leq C \|Hu(t)\|_{L^2(\mathcal{M}_g)}.
$$
\end{lemma}

\begin{proof} We can essentially repeat the proof in the hyperbolic case. The only difference is that to have $\mathcal R_h + 4m$ bigger that a positive constant, we need $m >0$. \end{proof}

The third estimate of Theorem \ref{locsmoothteo2} is deduced in the same way as in the hyperbolic case.

{\bf Email Contacts:}
\begin{itemize}
\item Federico Cacciafesta: {\em cacciafe@math.unipd.it}
\item Anne-Sophie de Suzzoni: {\em adesuzzo@math.univ-paris13.fr}.
\end{itemize}

\end{document}